\Crefname{assumption}{Assumption}{Assumptions}
\crefname{assumption}{Assumption}{Assumptions}
\newcommand*\D{\,\mathrm{d}}
\newcommand*\tol{\mathrm{tol}}
\newcommand*\res{\mathrm{res}}
\newcommand*\R{\mathbb{R}}
\newcommand*\N{\mathbb{N}}
\newcommand*\bl{\left(}
\newcommand*\br{\right)}
\newcommand*\AV[1]{| #1 |}
\newcommand*\norm[1]{\| #1 \|}
\newcommand*\bignorm[1]{\big\| #1 \big\|}
\newcommand*\Bignorm[1]{\Big\| #1 \Big\|}
\newcommand*\biggnorm[1]{\bigg\| #1 \bigg\|}
\newcommand*\ip[2]{\langle #1 \,, #2 \rangle}
\newcommand*\nablax{\nabla_{\!x}}
\newcommand*\scale[1]{#1_{\mathrm{sc}}}
\newcommand*\lipschitz{L}
\newcommand*\Tspace{\REV{L_2^T}}
\newcommand*\discr{\delta}
\newcommand*\poro{\phi}
\newcommand*\poroGeneral{\varphi}
\newcommand*\poroGeneralDiscr{\poroGeneral_\discr}
\newcommand*\poroOp{\Theta}
\newcommand*\poroOpLipschitz{\lipschitz_\poroGeneral}
\newcommand*\poroTransform{\lambda}
\newcommand*\press{u}
\newcommand*\pressAlt{v}
\newcommand*\pressFlux{\eta}
\newcommand*\pressFluxAlt{\mu}
\newcommand*\pressDiscr{\press_\discr}
\newcommand*\pressAltDiscr{\pressAlt_\discr}
\newcommand*\pressFluxDiscr{\pressFlux_\discr}
\newcommand*\pressFluxAltDiscr{\pressFluxAlt_\discr}
\newcommand*\pressBilin{\Lambda}
\newcommand*\pressSolSpDiscr{\pressSolSp_\discr}
\newcommand*\pressLinOp{G}
\newcommand*\pressLinRhs{R}
\newcommand*\pressNonLinOp{G}
\newcommand*\pressNonLinOpDiff{D\pressNonLinOp}
\newcommand*\pressNonLinOpInv{F}
\newcommand*\pressNonLinOpInvDiff{D\pressNonLinOpInv}
\newcommand*\pressRanSp{V}
\newcommand*\pressRhs{l}
\newcommand*\pressSolSp{U}
\newcommand*\Qalpha{\tilde{\alpha}}
\newcommand*\Qbeta{\tilde{\beta}}
\newcommand*\paraOp{\mathcal{P}}
\newcommand*\paraOpDiscr{\paraOp_\discr}
\newcommand*\interOp{\mathcal{I}}
\newcommand*\projOp{\Pi}
\newcommand*\general{\psi}
\newcommand*\generalDiscr[1][]{\general_\discr^{#1}}
\newcommand*\generalOp{\Xi}
\newcommand*\generalOpLipschitz{\lipschitz_\general}
\newcommand*\discrErr[1]{\epsilon_\discr^{(#1)}}
\newcommand*\pressOp{\Phi}
\newcommand*\pressOpLipschitz{\lipschitz_\press}
\newcommand*\errorRedFac{\xi}
\newcommand*\bulkVisc{\eta}
\newcommand*\densityDiff{\Delta\rho}
\newcommand*\gravity{g}
\newcommand*\perm{\kappa}
\newcommand*\fluidVisc{\mu}
\newcommand*\shapeFun{\mathcal{S}}
\newcommand*\polySp{\mathbb{P}}
\newcommand*\polySpMult{\mathbb{Q}}
\newcommand*\RT{\mathrm{RT}}
\newcommand*\thefont{\expandafter\string\the\font}
\renewcommand*\epsilon{\varepsilon}
\DeclareMathOperator*\argmin{arg\,min}
\DeclareMathOperator\Div{div}
\newtheorem{theorem}{Theorem}
\newtheorem{proposition}[theorem]{Proposition}
\newtheorem{lemma}[theorem]{Lemma}
\newtheorem{corollary}[theorem]{Corollary}
\newtheorem{assumption}{Assumption}
\theoremstyle{definition}
\newtheorem{definition}[theorem]{Definition}
\newtheorem{remark}[theorem]{Remark}
\newcommand*\convfac{0.8135937003116855}
\newcommand*\REV[1]{#1}
\numberwithin{theorem}{section}
\numberwithin{equation}{section}
\title[An Adaptive Space-Time Method for Nonlinear Poroviscoelastic Flows]{An Adaptive Space-Time Method for Nonlinear Poroviscoelastic Flows with Discontinuous Porosities}
\author{Markus Bachmayr$^\dagger$}
\email{bachmayr@igpm.rwth-aachen.de}
\author{Simon Boisser\'{e}e$^\dagger$}
\email{boisseree@igpm.rwth-aachen.de}
\address{$^\dagger$ Institut f\"{u}r Geometrie und Praktische Mathematik, RWTH Aachen University, Templergraben 55, 52056 Aachen, Germany}
\date{\today}
\thanks{M.B.\ acknowledges funding by Deutsche Forschungsgemeinschaft (DFG, German Research Foundation) -- project numbers 233630050, 442047500 -- TRR 146, SFB 1481.  S.B.\ has been funded in part by the M3ODEL consortium at Johannes Gutenberg University Mainz and by Deutsche Forschungsgemeinschaft -- project number 442047500 -- SFB 1481.}
\begin{document}

\maketitle

\begin{abstract}
    This paper is concerned with a space-time adaptive numerical method for instationary porous media flows with nonlinear interaction between porosity and pressure, with focus on problems with discontinuous initial porosities. A convergent method that yields computable error bounds is constructed by a combination of Picard iteration and a least-squares formulation. The adaptive scheme permits spatially variable time steps, which in numerical tests are shown to lead to efficient approximations of solutions with localized porosity waves. The method is also observed to exhibit optimal convergence with respect to the total number of spatio-temporal degrees of freedom.
\end{abstract}

\section{Introduction}
In porous media flows, important transient effects can arise from nonlinear interactions of porosity and pressure, which in certain cases can lead to the formation of \emph{porosity waves}. 
These can take the form of solitary waves formed by traveling higher-porosity regions \cite{Yarushina2015b} or of chimney-like channels \cite{Raess2019}. 
Such effects are important, for instance, in the modeling of rising magma \cite{McKenzie1984,Barcilon1986}, where porosity waves arise due to high temperatures. Such waves or channels can also form in soft sedimentary rocks, in salt formations or under the influence of chemical reactions; see for example \cite{Yarushina2015a,Raess2018,Raess2019}.
Quantifying uncertainties caused by the formation of preferential flow pathways can thus be important for safety analyses in geoengineering applications \cite{Yarushina2022}.

\subsection{Poroviscoelastic model}
We consider the instationary po\-ro\-vis\-co\-elas\-tic model analyzed in~\cite{Bachmayr2023} that can be regarded as a generalization of the models introduced in \cite{Connolly1998,Vasilyev1998} for the interaction of porosity $\phi$ and effective pressure $u$.
Throughout, we assume a spatial domain $\Omega\subseteq \R^d$ with $d \in \N$ to be given. For $T>0$, we write $\Omega_T=(0,T) \times \Omega$.
The model for a poroviscoelastic flow on which we focus in this work reads
\begin{subequations}\label{eq:model}
	\begin{align}
		\label{eq:modelphieq}	\partial_t \poro&=-(1-\poro) \bl \frac{b(\poro)}{\sigma(\press)}\press +Q\partial_t \press \br ,\\
		\label{eq:modelueq} \partial_t \press&=\frac{1}{Q} \bl \nabla \cdot a(\poro)(\nabla \press + (1-\poro) f)-\frac{b(\poro)}{\sigma(\press)}\press \br ,
	\end{align}
\end{subequations}
with functions $a$, $b$ and $\sigma$ that are to be specified, and where $Q>0$ and $f\in \R^d$ are assumed to be given constants. For details on the derivation of \eqref{eq:model}, we refer to \cite[Appendix~A]{Bachmayr2023}.
Physically meaningful solutions of this problem need to satisfy $\poro \in (0,1)$ on $\Omega_T$.
The problem is supplemented with initial data
\begin{align}\label{eq:ic}
	\poro(0,x)=\poro_0(x), \quad \press(0,x)=\press_0(x),\quad x\in\Omega,
\end{align}
for given functions $\poro_0\colon \Omega\to (0,1)$ and $\press_0\colon \Omega\to \R$, as well as homogeneous Dirichlet boundary conditions for $\press$ on $(0,T]\times \partial\Omega$.

The coefficient functions $a$ and $b$ of main interest are of the form
\begin{equation}\label{eq:CK}
	a(\poro) = a_0\poro^n, \qquad b(\poro) = b_0\poro^m,
\end{equation}
with real constants $a_0, b_0 > 0$ and $n, m \geq 1$. This assumption on $a$ is motivated by the Carman-Kozeny relationship \cite{Costa2006} between the porosity $\poro$ and the permeability of the medium. The function $\sigma$ accounts for \emph{decompaction weakening} \cite{Raess2018,Raess2019} and $\sigma/\poro^m$ can be regarded as the effective viscosity. 

For modeling sharp transitions between materials, it is important to be able to treat porosities with \emph{jump discontinuities}. These turn out to be determined mainly by the initial datum $\poro_0$ for the porosity.
As shown in \cite{Bachmayr2023}, under appropriate conditions on $\poro_0$ that permit jump discontinuities,  these generally remain present also in the corresponding solution $\phi$, but under the given model cannot change their spatial location.

\subsection{Existing numerical methods and novelty}
Many different methods have been proposed to solve the above type of problem numerically, for example finite difference schemes with implicit time-stepping in~\cite{Connolly1998} and adaptive wavelets in~\cite{Vasilyev1998}. In a number of recent works, pseudo-transient schemes based on explicit time stepping in a pseudo-time variable have been investigated. Due to their compact stencils, low communication overhead and simple implementation, such schemes are well suited for parallel computing on GPUs, so that very high grid resolutions can be achieved to compensate the low order of convergence, as shown for example in \cite{Raess2014,Raess2018,Raess2019,Reuber2020,Yarushina2020,Utkin2021}.
Even though all of these schemes are observed to work well for smooth initial porosities $\poro_0$, their convergence can be very slow in problems with nonsmooth $\poro_0$, in particular in the presence of discontinuities. In such cases, due to the smoothing that is implicit in the finite difference schemes, accurately resolving sharp localized features can require extremely large grids. An example is shown in \cref{fig:comparison}\REV{; for details on the model setup, see \cref{sec:setup}.}
\begin{figure}[h!]
	\centering
	\vspace*{-0.3cm}
	\capstart
	\includegraphics[width=\convfac\textwidth]{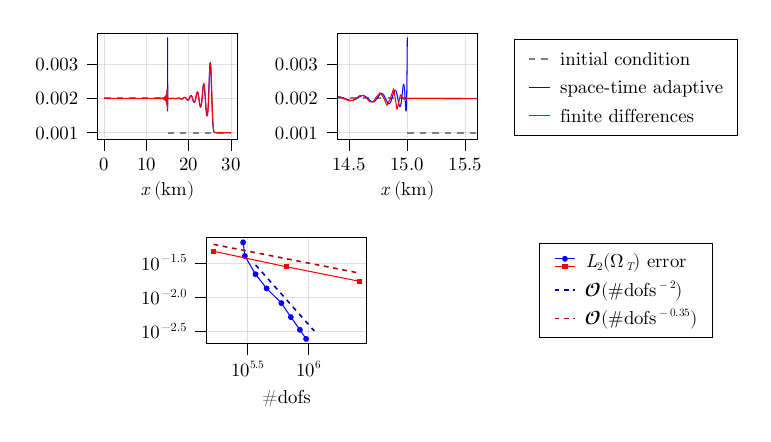}
	\vspace*{-0.8cm}
	\caption{Porosity approximation of space-time adaptive solver with polynomial degree $3$ and finite difference solver (top left) with zoom-in at the discontinuity (top right) and associated convergence rates (bottom; the space-time approximation by the finite difference scheme in the comparison uses a grid with sizes $\Delta t \eqsim \Delta x$, but is computed with smaller intermediate time steps for stability).}\label{fig:comparison}
\end{figure}%

We introduce a space-time adaptive method for solving~\eqref{eq:model} based on a combination of Picard iteration for \eqref{eq:modelphieq} and a particular adaptive least squares discretization of \eqref{eq:modelueq}. While we focus on this particular model case, the approach can be generalized, for example, to similar problems with full force balance, where~\eqref{eq:modelueq} is replaced by a time-dependent Stokes problem as in \cite{Raess2019}.

The adaptive scheme yields efficient approximations of localized features of solutions, in particular in the presence of discontinuities, and can generate space-time grids corresponding to spatially adapted time steps.
The method provides a posteriori estimates of the error with respect to the exact solution of the coupled nonlinear system of PDEs.
Moreover, we numerically observe optimal convergence rates of the generated discretizations with respect to the total number of degrees of freedom.

\subsection{Outline}
In \cref{sec:modeldescription} we describe the basic equations, as well as some possible simplifications and reformulations. We then consider a space-time method for the parabolic equation in \cref{sec:parabolic} and for the pointwise ODE in \cref{sec:fixedpointdiscr}, which yields a method for the full coupled problem. The convergence of this method is shown in \cref{sec:convergence}, and the resulting adaptively controlled scheme is described in \cref{sec:adapt_tol}. In \cref{sec:applications} we show numerical results (especially in the case of discontinuities) and in \cref{sec:error}  we numerically investigate the convergence rates of the fully adaptive methods from \cref{sec:adapt_tol}. At the end we briefly discuss a similar numerical method for the simplified viscous limit model in \cref{sec:viscous}.

\section{Assumptions and simplified models}\label{sec:modeldescription}
In this section we describe the small-porosity approximation as a common simplification and show numerically that it may not be suitable in the case of initial data of low regularity. Based on a transformed version of the general model that facilitates its analysis with non-smooth data, we then state the mild-weak formulation of~\eqref{eq:model} on which our numerical scheme is based.

We start with a crucial assumption on $\sigma$ required for the analysis of~\eqref{eq:model} in~\cite{Bachmayr2023}, which we also rely on  in what follows.
\begin{assumption}\label{ass:sigma}
	We assume that $\sigma\in C^1(\R)$ satisfies
	\begin{align*}
		\sup_{v \in \R} \sigma(v) < \infty,\quad
		\inf_{v\in \R} \sigma(v) > 0, \quad
		\sigma' \geq 0 \text{ on $\R$},
	\end{align*}
	as well as
	\begin{align*}
		\inf_{v\in \R} \left\{\frac{1}{\sigma(v)}-\frac{v\sigma'(v)}{\sigma^2(v)}\right\}>0,\quad
		c_L=\sup_{v\in \R} \left\{\frac{1}{\sigma(v)}-\frac{v\sigma'(v)}{\sigma^2(v)}\right\}<\infty.
	\end{align*}
\end{assumption}
A trivial example for $\sigma$ fulfilling \cref{ass:sigma} is given by $\sigma(v) = c_0$ for all $v\in\R$ with a constant $c_0>0$, proposed in \cite{Vasilyev1998}. Another example, suggested in \cite{Raess2018,Raess2019} and verified to satisfy \cref{ass:sigma} in \cite{Bachmayr2023}, is 
\begin{align}\label{eq:sigma}
	\sigma(v)=c_0 \bl 1 - c_1 \bl 1 + \tanh \bl -\frac{v}{c_2} \br \br \br ,\quad v\in \R,
\end{align}
which provides a phenomenological model for decompaction weakening.
Here $c_0>0$ is a positive constant, $c_1\in [0,\frac12)$ and $c_2>0$, where $1 + \tanh$ can be regarded as a smooth approximation of a step function taking values in the interval $(0,2)$. 
In \REV{the most} well-studied case $c_1=0$, as considered in \cite{Vasilyev1998}, one observes the formation of porosity waves, whereas $c_1>0$ with appropriate problem parameters and initial conditions can lead to the formation of channels. 
In what follows, it will be convenient to write
\begin{equation}\label{eq:kappa}
	\kappa(v) = \frac{v}{\sigma(v)}\,.
\end{equation}
Note that $\kappa$ is Lipschitz continuous with Lipschitz constant $c_L$ by \cref{ass:sigma}.

\subsection{Small-porosity approximation}
For initial data with $\poro_0(x)\in(0,1]$ for $x \in \Omega$ and bounded $\press$, for a classical solution to~\eqref{eq:model} one has $\poro\leq 1$ due to the presence of the factor $(1-\poro)$ in~\eqref{eq:modelphieq}. The \emph{small-porosity approximation} consists in replacing the factor $(1-\poro)$ in~\eqref{eq:model} by 1, which gives the simplified model%
\begin{subequations}\label{eq:modelmod}
	\begin{align}
		\partial_t \poro&=- \bigl( {b(\poro)}{\kappa(\press)} +Q\partial_t \press \bigr) ,\label{eq:modelmodphi}\\
		\partial_t \press&=\frac{1}{Q} \bigl( \nabla \cdot a(\poro)(\nabla \press +f)-{b(\poro)}{\kappa(\press)} \bigr) \label{eq:modelmodu}.
	\end{align}
\end{subequations}
We consider~\eqref{eq:modelmod} subject to the same boundary conditions on $\press$ and initial data for $\poro$ and $\press$ as for \eqref{eq:model}.

For small $\poro$, it is typically assumed that the qualitative behavior of solutions to~\eqref{eq:modelmod} are similar to the ones of the original model~\eqref{eq:model}.
However, the small-porosity approximation can lead to unphysical solutions in the case of a discontinuous $\phi_0$. This can be seen on the left plot in \cref{fig:unphysical_physical}, where starting from typical porosity values of at most $0.2$, the solution develops a peak where $\phi > 1$ at the location of the discontinuity. \REV{See \cref{sec:setup} for details on the model parameters.} Hence we are interested in keeping the factor $(1-\poro)$ in what follows.
\begin{figure}[h!]
	\centering
	\vspace*{-0.3cm}
	\capstart
	\FPmul\result{0.95}{\convfac}
	\includegraphics[width=\result\textwidth]{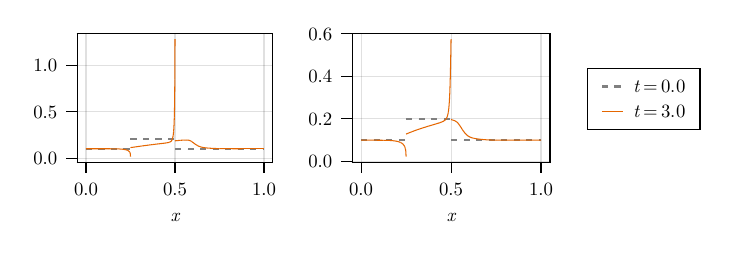}
	\vspace*{-0.6cm}
	\caption{Unphysical solution behavior of the porosity due to the low-porosity approximation (left) and physically correct behavior of the transformed problem (right).}\label{fig:unphysical_physical}
\end{figure}%

This leads to another difficulty, namely that for the full coupled problem~\eqref{eq:model} with non-smooth initial porosity $\poro_0$, the interpretation of the first equation~\eqref{eq:modelphieq} is not obvious, since it contains a term of the form $(1-\poro)\partial_t \press$. When $\poro$ has jump discontinuities in the spatial variables, $\partial_t \press$ in general only exists in the distributional sense, that is, as an element of $L_2(0,T; H^{-1}(\Omega))$. In this case, the product of the distribution $\partial_t \press$ and  $(1-\poro)$, which is  not weakly differentiable, may not be defined.
However, the original problem~\eqref{eq:model} including the factor $(1-\poro)$ can be reduced to a similar form as~\eqref{eq:modelmod} by the following observation: \eqref{eq:modelphieq} can formally be rewritten as
\begin{align*}
	\partial_t \log (1 - \poro) = {b(\poro)}{\kappa(\press)} +Q\partial_t \press .
\end{align*}
Introducing the new variable $\poroTransform = - \log(1-\poro)$, so that $\poro = 1 - e^{-\poroTransform}$, the system~\eqref{eq:model} can be written in the form
\begin{subequations}\label{eq:logtransformed}
	\begin{align}
		\partial_t \poroTransform &= - \bl {b(1 - e^{-\poroTransform})}{\kappa(\press)} +Q\partial_t \press \br , \\
		\partial_t \press &= \frac{1}{Q} \bl \nabla \cdot a(1-e^{-\poroTransform})(\nabla \press + e^{-\poroTransform} f)-{b(1 - e^{-\poroTransform})}{\kappa(\press)} \br ,
	\end{align}
\end{subequations}
which has the same structure as~\eqref{eq:modelmod}. Physically meaningful solutions with $0<\poro <1$ are obtained precisely when $\poroTransform > 0$. As we shall see, the reformulation~\eqref{eq:logtransformed} is also advantageous for obtaining a weak formulation, and we will thus consider~\eqref{eq:model} in this form.

Using this transformation, the unphysical behavior shown in Figure \ref{fig:unphysical_physical} can be prevented without changing the general numerical method. The right plot in \cref{fig:unphysical_physical} shows the solution of the transformed problem~\eqref{eq:logtransformed} for the same parameters and initial setup, with $\phi < 1$ as expected.
This shows that it is in general favorable to consider the full model instead of the low-porosity approximation, especially since it does not require more computational effort to solve the transformed problem~\eqref{eq:logtransformed}.

\subsection{Mild and weak formulations}\label{sec:finalform}
Next we introduce the basic notions of solutions for the different formulations of the problem that we consider in the following sections. The viscoelastic models~\eqref{eq:modelmod} and~\eqref{eq:logtransformed} are both of the general form
\begin{subequations}\label{eq:modelgeneral}
	\begin{align}
		\partial_t \poroGeneral &= -{\beta(\poroGeneral)}{\kappa(\press)} - Q \partial_t \press ,\label{eq:modelgeneral_a}\\
		\partial_t \press &= \frac{1}{Q} \bl \nabla \cdot \alpha(\poroGeneral) \bigl(\nabla \press + \zeta(\poroGeneral)\bigr)-{\beta(\poroGeneral)}{\kappa(\press)} \br ,\label{eq:modelgeneral_b}
	\end{align}
\end{subequations}
where $\alpha, \beta$ and $\zeta$ are given locally Lipschitz continuous functions, with initial conditions $\poroGeneral(0, \cdot) = \poroGeneral_0$ and $\press(0,\cdot) = \press_0$ in $\Omega$.
Note that since $\poroGeneral$ in \eqref{eq:modelgeneral} is in general bounded from above and below, on this range the functions $\alpha, \beta$ and $\zeta$ satisfy a uniform Lipschitz condition. 

To give a meaning to these equations for data of low regularity (in particular, when only $\poroGeneral_0 \in L_\infty(\Omega)$ is assumed), 
we write \eqref{eq:modelgeneral_a} in integral form and consider a weak formulation of \eqref{eq:modelgeneral_b}. This leads us to the formulation, for a.e.\ $t \in [0,T]$,
\begin{subequations}\label{eq:mildweak}
	\begin{align}
		\poroGeneral(t,\cdot) &= \poroGeneral_0 + Q\press_0 - Q\press(t,\cdot) - \int_0^t \beta(\poroGeneral\bigl(s,\cdot)\bigr) \, \kappa\bigl(\press(s,\cdot)\bigr) \D s  &  & \text{in $L_2(\Omega)$,}  \label{eq:mild}\\
		\partial_t \press &= \frac{1}{Q} \bl \nabla \cdot \alpha(\poroGeneral) \bigl(\nabla \press + \zeta(\poroGeneral)\bigr)-\beta(\poroGeneral) \, \kappa(\press) \br & & \text{in $H^{-1}(\Omega)$,} \label{eq:weak}
	\end{align}
\end{subequations}
subject to Dirichlet boundary conditions for $\press$ and initial data $\poroGeneral(0,\cdot)=\poroGeneral_0$, $\press(0,\cdot)=\press_0$ in $\Omega$ for some given $\poroGeneral_0, \press_0 \in L_2(\Omega)$.

\REV{
\subsection{Well-posedness}\label{sec:well-posedness}
In addition to \cref{ass:sigma} on $\sigma$ (and hence, in view of \eqref{eq:kappa}, on $\kappa$), we make the following assumptions on $\alpha$, $\beta$ and $\zeta$ to obtain well-posedness of solutions and convergence of the numerical method.
\begin{assumption}\label{ass:alphabeta}
	We assume that $\alpha, \beta \in C^{0,1}_\mathrm{loc}(\R^+)$, $\zeta \in C^{0,1}_\mathrm{loc}(\R^+;\R^d)$
	and that $\alpha$ is strictly positive on $\R^+$; in other words, for each $\delta > 0$ there exists an $\epsilon > 0$ such that for all $x \in [\delta,\infty)$ we have $\alpha(x) \geq \epsilon > 0$.
	Furthermore we assume that $\beta(x) \geq 0$ for each $x \in \R^+$.
\end{assumption}
\begin{assumption}\label{ass:parabolic}
	We assume that $\sup_{t\in[0,T]} \norm{\poroGeneral(t,\cdot)}_{L_{\infty}(\Omega)}\leq R$, $\inf_{t\in[0,T]} \poroGeneral(t,x) \geq \epsilon$ for a.e.\ $x \in \Omega$ as in~\cite{Bachmayr2023}. Furthermore, let
	\begin{align}\label{eq:W1inf}
		\bignorm{\nablax\paraOp[\poroGeneral]}_{L_{\infty}(\Omega_T)} \leq \overline{C} \,,
	\end{align}
	with $\overline{C}$ depending only on $R$ and $\epsilon$.
\end{assumption}
Under \cref{ass:alphabeta,ass:parabolic,ass:sigma}, we obtain that~\eqref{eq:mildweak} has a unique solution
\begin{equation}
\label{eq:uniquesol}
	(\poroGeneral,\press) \in L_{\infty}(\Omega_T) \times \Big(L_2(0,T;H^1(\Omega))\cap H^1(0,T;H^{-1}(\Omega))\Big).
\end{equation}
This is an immediate consequence of the contraction property shown in \cref{lem:poroContraction} below.  
The proof can be carried out as in \cite[Thm.~4.6]{Bachmayr2023}, where a similar result was shown without \cref{ass:parabolic}. However, the different norms in \eqref{eq:uniquesol} are required for the analysis of our numerical scheme.
}

\REV{
While \cref{ass:alphabeta,ass:sigma} are generally true for the model cases we consider here, \cref{ass:parabolic} does not directly follow from regularity theory. It can be shown, however, for data with piecewise smooth data under additional restrictions on the type of jump discontinuity in $\poroGeneral_0$, summarized in the following remark.
\begin{remark}\label{rem:piecewise}
	Assume $\Omega^j\subset \Omega$ for $j = 1,\ldots,M$ being pairwise disjoint open subsets such that $\overline{\Omega} = \bigcup_{j=1}^M \overline{\Omega}^j$, $\Omega^j \Subset \Omega$ for $j = 1,\ldots,M-1$, $\partial \Omega \subset \partial \Omega^M$ and $\Omega^j$ has a $C^{1,\mu}$-boundary with $\mu>0$. If $\poroGeneral_0 \in C^{0,\gamma}(\overline{\Omega}^j)$ and  $\press_0 \in C^{1,\gamma}(\overline{\Omega}^j)$ for $j=1,\ldots,M$, $\gamma \in (0,\mu/(1+\mu)]$ we get a solution $(\poroGeneral,\press) \in C^{0,\gamma}_\mathrm{par}(\overline{\Omega}_T^j) \times C^{1,\gamma}_\mathrm{par}(\overline{\Omega}_T^j)$  by \cite[Thm.~4.6]{Bachmayr2023}.
	This immediately implies
	\begin{align*}
		\bignorm{\nablax \paraOp[\poroGeneral]}_{L_\infty(\Omega_T)} \leq \overline{C},
	\end{align*}
	with a constant independent of $\poroGeneral$ due to the uniform boundedness of $\poroGeneral$.
\end{remark}
Note that \Cref{rem:piecewise} does not cover all numerically relevant cases for $d \geq 2$ due to the smoothness assumptions on the subdomain boundaries $\partial\Omega^j$.
The spaces $C^{k,\gamma}_\mathrm{par}(\overline{\Omega}_T^j)$, $k=0,1$ are defined in \cref{sec:holder}.
}

\section{Inexact fixed-point iteration}\label{sec:fixedpoint}
Similar to the well-posedness results in \cite{Bachmayr2023}, we perform a Picard iteration for $\poroGeneral$  in order to solve~\eqref{eq:mild}.
We denote the solution for $\press$ given a fixed $\overline{\poroGeneral}$ by $\paraOp[\overline{\poroGeneral}]$. The \REV{$(k+1)$-th iterate} then reads
\begin{align}\label{eq:fixedpoint_poro}
	\poroGeneral^{(k+1)}(t,\cdot)
	= \poroGeneral_0 - Q (\paraOp[\poroGeneral^{(k)}](t,\cdot) - \press_0) - \int_0^t \beta(\poroGeneral^{(k)}(s,\cdot)) \kappa(\paraOp[\poroGeneral^{(k)}](s,\cdot)) \D s.
\end{align}
One may iterate until reaching a certain tolerance determined, for example, by an a posteriori error estimate based on contractivity. As shown in \cref{sec:lipschitz} \REV{(see also \cite[Sec.~4]{Bachmayr2023})}, the mapping defined by the right-hand side of \eqref{eq:fixedpoint_poro} is indeed a contraction \REV{with respect to appropriate norms} for sufficiently small $T$.
In the following section, we consider a numerical scheme for~\eqref{eq:weak} for given $\overline{\poroGeneral}$. We then turn to the discretization of~\eqref{eq:fixedpoint_poro} in \cref{sec:fixedpointdiscr}.

\subsection{Treatment of the parabolic equation}\label{sec:parabolic}
To solve~\eqref{eq:weak} numerically for a given $\overline{\poroGeneral}$, we linearize it by means of another Picard iteration, which leads to solving
\begin{align}\label{eq:fixedpoint_press}
	\partial_t \press^{(k)} = \frac{1}{Q} \bl \nabla \cdot \alpha(\overline{\poroGeneral}) (\nabla \press^{(k)} + \zeta(\overline{\poroGeneral}))-\beta(\overline{\poroGeneral}) \, \frac{\press^{(k)}}{\sigma(\press^{(k-1)})}\br, \quad \press^{(k)}(0,\cdot) = \press_0,
\end{align}
given the previous iterate $\press^{(k-1)}$. We start with an initial iterate $\press^{(0)}$ which, unless stated otherwise, will be a constant continuation of $\press_0$.
Following \cite{Gantner2021,Gantner2024}, let
\begin{align*}
	\pressSolSp
	= \left\{ (\press,\pressFlux) \in L_2(0,T;H_0^1(\Omega)) \times L_2(\Omega_T)^d \,\colon\, \Div(\press,\pressFlux) \in L_2(\Omega_T) \right\},
\end{align*}
with the induced graph norm
\begin{align}\label{eq:graphnorm}
	\norm{(\press,\pressFlux)}_\pressSolSp^2
	= \norm{(\press,\pressFlux)}_{L_2(\Omega_T,\R^{d+1})}^2 + \norm{\nablax \press}_{L_2(\Omega_T,\R^d)}^2 + \norm{\Div(\press,\pressFlux)}_{L_2(\Omega_T)}^2,
\end{align}
where $\Div(\press,\pressFlux) = \partial_t \press + \Div_x\pressFlux$ denotes the space-time divergence. Moreover, let
\begin{align*}
	\pressRanSp
	= L_2(\Omega_T) \times L_2(\Omega_T,\R^d) \times L_2(\Omega),
\end{align*}
endowed with its canonical norm, and 
\begin{align}\label{eq:operator}
	\pressLinOp[\overline{\press}](\press,\pressFlux) = \begin{pmatrix} \Div(\press, \pressFlux) + \Qbeta \, \frac{\press}{\sigma(\overline{\press})}\\\pressFlux + \Qalpha \, \nablax \press\\\press(0,\cdot)\end{pmatrix},\qquad
	\pressLinRhs = \begin{pmatrix}0\\- \Qalpha \, \zeta\\\press_0\end{pmatrix},
\end{align}
where we absorbed $\overline{\poroGeneral}$ and $\frac{1}{Q}$ into the coefficients $\Qalpha, \Qbeta \in L_\infty\REV{(\Omega_T)}$. This allows us to rewrite~\eqref{eq:fixedpoint_press} as
\begin{align}\label{eq:systemlin}
	\pressLinOp\big[\press^{(k-1)}\big]\big(\press^{(k)},\pressFlux^{(k)}\big) = \pressLinRhs,
\end{align}
similar to \cite{Fuehrer2021,Gantner2021,Gantner2024}.
Now \cite[Thm.~2.3]{Gantner2021} yields the following result on the well-posedness of~\eqref{eq:systemlin}.
\begin{theorem}\label{thm:isomorphism}
	Let $\overline{\press} \in U$ and $\Qalpha, \Qbeta \in L_\infty(\Omega_T)$ with $\Qalpha$ uniformly positive. Then $\pressLinOp[\overline{\press}]: \pressSolSp \to \pressRanSp$ is an isomorphism.
\end{theorem}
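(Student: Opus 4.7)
The plan is to view \eqref{eq:systemlin} as a first-order formulation of the scalar parabolic equation obtained by eliminating the flux, and then to invoke \cite[Thm.~2.3]{Gantner2021}. Substituting $\pressFlux = -\Qalpha\,\nablax \press + (\text{flux datum})$ from the second row of \eqref{eq:operator} into the first row yields
\[
\partial_t \press - \Div_x(\Qalpha\,\nablax \press) + \tilde{\gamma}\,\press \;=\; (\text{datum in } L_2(0,T;H^{-1}(\Omega))),
\]
with homogeneous Dirichlet boundary conditions, initial datum $\press_0 \in L_2(\Omega)$, and zero-order coefficient $\tilde{\gamma} := \Qbeta/\sigma(\overline{\press})$. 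The spaces $\pressSolSp, \pressRanSp$ together with the graph norm \eqref{eq:graphnorm} are precisely the trial and test spaces in the least-squares framework of the cited work.

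The first task is to verify the coefficient hypotheses. The diffusion coefficient $\Qalpha \in L_\infty(\Omega_T)$ is uniformly positive by assumption. The key observation for the zero-order term is that \cref{ass:sigma} gives $\inf_{v \in \R} \sigma(v) > 0$, so $1/\sigma(\overline{\press}) \in L_\infty(\Omega_T)$ for every $\overline{\press} \in \pressSolSp$ independently of any regularity of $\overline{\press}$ beyond measurability. Together with $\Qbeta \in L_\infty(\Omega_T)$ this yields $\tilde{\gamma} \in L_\infty(\Omega_T)$, which is exactly the kind of bounded zero-order term admitted by the framework of \cite{Gantner2021}.

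With these hypotheses in hand, continuity of $\pressLinOp[\overline{\press}]\colon \pressSolSp \to \pressRanSp$ is immediate from the $L_\infty$ bounds on $\Qalpha$, $\tilde{\gamma}$ and the definition \eqref{eq:graphnorm}. For the inf--sup bound from below, the standard energy estimate for the parabolic equation above controls $\press$ in $L_2(0,T;H_0^1(\Omega)) \cap H^1(0,T;H^{-1}(\Omega))$ by the residual in $\pressRanSp$; the flux $\pressFlux$ is then recovered algebraically from the second row of \eqref{eq:operator} and the space-time divergence $\Div(\press,\pressFlux)$ from the first, yielding
\[
\norm{(\press,\pressFlux)}_\pressSolSp \;\lesssim\; \norm{\pressLinOp[\overline{\press}](\press,\pressFlux)}_\pressRanSp.
\]
Surjectivity follows by solving the parabolic equation for arbitrary right-hand side in $L_2(0,T;H^{-1}(\Omega)) \times L_2(\Omega)$ and then reconstructing $\pressFlux$. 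The main obstacle is really only conceptual: one has to notice that \cref{ass:sigma} alone provides enough control of the zero-order coefficient so that \cite[Thm.~2.3]{Gantner2021} applies verbatim, with no extra conditions on the linearization point $\overline{\press} \in \pressSolSp$.
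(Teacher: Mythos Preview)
Your proposal is correct and follows exactly the paper's approach: the paper does not give an independent proof but simply states that \cite[Thm.~2.3]{Gantner2021} yields the result, and you do the same while additionally spelling out why the coefficient hypotheses of that theorem are met (in particular, that \cref{ass:sigma} forces $\Qbeta/\sigma(\overline{\press}) \in L_\infty(\Omega_T)$). Your added sketch of continuity, inf--sup, and surjectivity is a faithful summary of what \cite[Thm.~2.3]{Gantner2021} establishes, so there is no divergence in method.
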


Due to \cref{ass:sigma,ass:alphabeta}, the assumptions of \cref{thm:isomorphism} are fulfilled. Furthermore, the norm induced by $\pressLinOp[\overline{\press}]$ is equivalent to $\norm{\cdot}_\pressSolSp$ independently of $\overline{\press}$ due to the uniform boundedness of $\sigma$ from above and below. 

Similar to~\cite{Gantner2024}, we discretize $\pressSolSp$ by partitioning $\Omega$ and $(0\,,T)$ separately, which leads to a partition $\mathcal{T}_{\press}$ of prisms. In this work we focus on the case of \REV{rectangular} elements to discretize $\Omega$, which leads to the definition of ($d+1$)-dimensional \REV{rectangles} $\textbf{I}=I_1\times\ldots\times I_{d+1}\in\mathcal{T}_{\press}$ where \REV{$I_{1}$} denotes the temporal direction. \REV{Note that this is convenient for the applications that we target here, but more general prismatic partitions as in \cite{Gantner2024} could be used in the same manner.} We write $\textbf{I}_x = \REV{I_2\times\ldots\times I_{d+1}}$ and define local shape functions
\begin{align*}
	\shapeFun_{\REV{q,p}}(\textbf{I})
	= \big( \polySp_{\REV{q}+1}(\REV{I_{1}}) \otimes \polySpMult_{\REV{p}}(\textbf{I}_x)  \big) \times \big( \polySp_{\REV{q}}(\REV{I_{1}}) \otimes \RT_{\REV{p}}(\textbf{I}_x) \big),
\end{align*}
on $\textbf{I}\in\mathcal{T}_{\press}$ as in \cite[Sec.~2]{Gantner2024} where $\polySp_{\REV{p}}(I)$, $I\subseteq \R$ denotes polynomials of degree $\REV{p}$ and
\begin{align}\label{eq:polyspace}
	\begin{split}
		\polySpMult_{\REV{p}_1,\ldots,\REV{p}_d}(\textbf{I}_x) &= \polySp_{\REV{p}_1}(I_1) \otimes \ldots \otimes \polySp_{\REV{p}_d}(I_d),\\
		\polySpMult_{\REV{p}} &= \polySpMult_{\REV{p},\ldots,\REV{p}}(\textbf{I}_x),\\
		\RT_{\REV{p}}(\textbf{I}_x) &= \polySpMult_{\REV{p}+1,\REV{p},\ldots,\REV{p}}(\textbf{I}_x) \times \ldots \times \polySpMult_{\REV{p},\ldots,\REV{p},\REV{p}+1}(\textbf{I}_x).
	\end{split}
\end{align}
Then we consider the conforming subspace
\begin{equation}\label{eq:fespace}
	\pressSolSpDiscr(\mathcal{T}_{\press})
	= \big\{ (\pressDiscr,\pressFluxDiscr) \in H^1(0,T; H^1_0(\Omega)) \times L_2(0,T;H_{\Div_x}(\Omega)) \,\colon (\pressDiscr,\pressFluxDiscr)|_{\textbf{I}} \in \shapeFun_{\REV{q,p}}(\textbf{I}), \textbf{I} \in \mathcal{T}_{\press} \big\}.
\end{equation}
In the case of axis-parallel cubes with trivial normal vectors, the conformity corresponds to $\pressDiscr$ being continuous and $(\pressFluxDiscr)_i$ being continuous in the $i$-th spatial direction.
As proposed in~\cite[Sec.~2]{Gantner2024}, we will  restrict ourselves to the optimal polynomial degrees $\REV{q}+1=\REV{p}$ in order to achieve better convergence rates.

We solve~\eqref{eq:systemlin} numerically for fixed $\overline{\press}$ in the least-squares formulation of~\cite{Fuehrer2021,Gantner2021,Gantner2024}, which adapted to the present case, in terms of the definitions in \eqref{eq:operator}, reads
\begin{align}\label{eq:leastsquares}
	(\pressDiscr,\pressFluxDiscr)
	= \argmin_{(\pressAltDiscr,\pressFluxAltDiscr) \in \pressSolSpDiscr} \norm{\pressLinOp[\overline{\press}](\pressAltDiscr,\pressFluxAltDiscr) - \pressLinRhs}_\pressRanSp \,.
\end{align}
With the associated bilinear form $\pressBilin$ and right-hand side $\pressRhs$ given by
\begin{align*}
	\pressBilin((\pressDiscr,\pressFluxDiscr),(\pressAltDiscr,\pressFluxAltDiscr))
	= \ip{\pressLinOp[\overline{\press}](\pressDiscr,\pressFluxDiscr)}{\pressLinOp[\overline{\press}](\pressAltDiscr,\pressFluxAltDiscr)}_\pressRanSp,  \quad
	\pressRhs(\pressAltDiscr,\pressFluxAltDiscr)  
	= \ip{\pressLinRhs}{\pressLinOp[\overline{\press}](\pressAltDiscr,\pressFluxAltDiscr)}_\pressRanSp \,,
\end{align*}
the solution $(\pressDiscr,\pressFluxDiscr) \in \pressSolSpDiscr$ of \eqref{eq:leastsquares} is characterized by
\begin{align*}
	\pressBilin((\pressDiscr,\pressFluxDiscr),(\pressAltDiscr,\pressFluxAltDiscr)) = \pressRhs(\pressAltDiscr,\pressFluxAltDiscr)
	\quad \text{for all $(\pressAltDiscr,\pressFluxAltDiscr) \in \pressSolSpDiscr$.}
\end{align*}
Since the residual is evaluated in the $L_2$-space $V$, its $L_2$-norms on elements of $\mathcal{T}_{\press}$ yield reliable and computable local error estimators that can be used to drive an adaptive refinement routine. By \cref{thm:isomorphism}, we furthermore have an equivalence between error and residual,
\begin{align}\label{eq:errestimate}
	\norm{(\press,\pressFlux) - (\pressDiscr,\pressFluxDiscr)}_\pressSolSp
	\eqsim \norm{\pressLinOp[\overline{\press}](\pressDiscr,\pressFluxDiscr) - \pressLinRhs}_\pressRanSp \,,
\end{align}
for $\norm{\cdot}_\pressSolSp$ defined in~\eqref{eq:graphnorm}.
\begin{remark}\label{rem:taylor}
	It is possible to linearize~\eqref{eq:weak} differently by performing a linearization of the term $\frac{\press}{\sigma(\press)}$ in $\overline{\press}$, which yields
	\begin{align}\label{eq:taylor}
		\partial_t \press = \nabla \cdot \Qalpha(\overline{\poroGeneral}) (\nabla \press + \zeta(\overline{\poroGeneral}))-\Qbeta(\overline{\poroGeneral}) \bl \frac{\press}{\sigma(\overline{\press})} - \frac{\overline{\press} \, \sigma'(\overline{\press})}{\sigma(\overline{\press})^2} (\press - \overline{\press})\br,\quad\press(0,\cdot) = \press_0.
	\end{align}
	The resulting Gau\REV{ss}-Newton-type iteration generally converges faster than the simpler quasilinear iteration in \eqref{eq:fixedpoint_press}. 
\end{remark}
\REV{Next} we introduce the discrete parabolic solution operator $\paraOpDiscr$ which is used in the subsequent sections.
\begin{definition}\label{def:paraOp}
	We define $\paraOpDiscr[\overline{\poroGeneral},\overline{\press}] = (\pressDiscr,\pressFluxDiscr)$ to be the solution of~\eqref{eq:leastsquares}
	\REV{such that
		\begin{align}\label{eq:lsqlin}
			\bignorm{G[\overline{\press}](\pressDiscr^{(\ell)},\pressFluxDiscr^{(\ell)}) - R}_\pressRanSp \leq \tol_\mathrm{lsq}
		\end{align}
		with}
	a tolerance $\tol_\mathrm{lsq} > 0$ and set
	\begin{align}\label{eq:lsqnonlin}
		\paraOpDiscr[\overline{\poroGeneral}] = (\pressDiscr^{(\ell)},\pressFluxDiscr^{(\ell)}) = \paraOpDiscr[\overline{\poroGeneral},\pressDiscr^{(\ell-1)}],
	\end{align}
	such that $\bignorm{G[\pressDiscr^{(\ell)}](\pressDiscr^{(\ell)},\pressFluxDiscr^{(\ell)}) - R}_\pressRanSp \leq \tol_\press$ holds for some given tolerance $\tol_\press \geq \tol_\mathrm{lsq} > 0$.
\end{definition}

\subsection{A space-time adaptive fixed-point method}\label{sec:fixedpointdiscr}
\REV{Given a partition of $\mathcal{T}_{\poroGeneral}$ of $\Omega_T$ into prisms, where $\polySpMult_{m}$ denotes the space-time tensor polynomial space defined in~\eqref{eq:polyspace}, we define the discontinuous finite element space}
\begin{align}\label{eq:poroSolSpD}
	\REV{ X_\delta^{m}(\mathcal{T}_{\poroGeneral}) = \{ f \in L_2(\Omega_T) \,\colon f|_{\textbf{I}} \in \polySp_{m}(I_{t}) \otimes \polySpMult_{m}(\textbf{I}_{x}) \text{ for each } \textbf{I}\in\mathcal{T}_{\poroGeneral} \} \,. }
\end{align}
To approximate $\poroGeneral$, we aim to discretize~\eqref{eq:fixedpoint_poro} while maintaining convergence of the fixed-point iteration. This can be done using \eqref{eq:poroDiscr}, where we consider a given approximation $\paraOpDiscr[\poroGeneralDiscr^{(k)}]$ of $\paraOp[\poroGeneralDiscr^{(k)}]$ from \cref{def:paraOp} on some adaptively refined space-time grid.
Then we compute
\begin{align}\label{eq:poroDiscr}
	\begin{split}
		\poroGeneralDiscr^{(k+1)}
		=\, \projOp\bigg( \poroGeneral_{0} - Q \Big( \paraOpDiscr[\poroGeneralDiscr^{(k)}](t,\cdot) - \press_{0} \Big)
		 - \int_0^t \interOp\Big(\beta(\poroGeneralDiscr^{(k)}(s,\cdot)) \, \kappa(\paraOpDiscr[\poroGeneralDiscr^{(k)}](s,\cdot))\Big) \D s \bigg),
	\end{split}
\end{align}
where $\interOp$ denotes interpolation with high-order polynomials (using Chebyshev nodes) such that the resulting \REV{interpolation} error is bounded by a given tolerance $\tol_\mathrm{int}>0$.

Next we perform exact integration of the polynomial approximations. Note that it is important for this step to merge the grids for $\pressDiscr$ and $\poroGeneralDiscr^{(k)}$ first and make them uniform in time (that means without hanging nodes on time-facets) such that we can calculate the integral without running into problems with possible discontinuities in space. 

The resulting high-order polynomial on a time-uniform grid is projected to a lower-order polynomial on an adaptive grid \REV{$\mathcal{T}_{\poroGeneral}$} by a projection $\projOp$ such that the \REV{projection} error is bounded by $\tol_\mathrm{proj}>0$.
For this step we use an adaptive $L_2$-projection based on the $h$-adaptive approximation method derived in \cite[Sec.~2]{Binev2018}, which aligns with the theory developed in \cref{sec:lipschitz}. This projection is chosen in a specific way to allow for a temporal decomposition of $\Omega_T$ discussed in \cref{sec:temporal,sec:timeslices}.
\begin{algorithm}[h!]
	\caption{\textsc{Full Method} to solve~\eqref{eq:modelgeneral}}\label{alg:coupled}
	\begin{algorithmic}
		\REQUIRE $\tol_\poroGeneral$, $\tol_\mathrm{proj}$, $\tol_\mathrm{int}$, $\tol_\press$, $\tol_\mathrm{lsq}$, $\press_0$, $\poroGeneral_0$
		\ENSURE $\poroGeneralDiscr$, $\pressDiscr$
		\STATE initialize $\poroGeneralDiscr^{(0)}\REV{(t,x) = \poroGeneral_0(x), \res_\poroGeneral = \tol_\poroGeneral+1, k = 0}$
		\WHILE{$\res_\poroGeneral > \tol_\poroGeneral$}
		\STATE initialize $\pressDiscr^{(0)}\REV{(t,x) = \press_0(x), \res_\press = \tol_\press+1, \ell = 0}$
		\WHILE{$\res_\press > \tol_\press$}
		\STATE solve $\REV{(\pressDiscr^{(\ell+1)},\pressFluxDiscr^{(\ell+1)})} = \paraOpDiscr[\poroGeneralDiscr^{(k)},\pressDiscr^{(\ell)}]$ up to $\tol_\mathrm{lsq}$ \REV{by means of~\eqref{eq:lsqlin}}
		\STATE compute $\res_\press = \bignorm{G[\pressDiscr^{(\ell+1)}](\pressDiscr^{(\ell+1)},\pressFluxDiscr^{(\ell+1)}) - R}_\pressRanSp$
		\STATE \REV{$\ell = \ell+1$}
		\ENDWHILE
		\STATE calculate $\poroGeneralDiscr^{(k+1)}$ by \eqref{eq:poroDiscr} up to $\tol_\mathrm{proj}$, $\tol_\mathrm{int}$
		\STATE \REV{compute $\res_\poroGeneral = \bignorm{\poroGeneralDiscr^{(k+1)}-\poroGeneralDiscr^{(k)}}_{\Tspace\REV{(\Omega_T)}}$}
		\STATE \REV{$k = k+1$}
		\ENDWHILE
	\end{algorithmic}
\end{algorithm}

Next we combine the above steps in an adaptive scheme for the full nonlinear problem~\eqref{eq:mildweak}.
There are different ways of combining the methods from \cref{sec:parabolic,sec:fixedpointdiscr}, but the most reliable one coincides with the theoretical ideas from~\cite{Bachmayr2023} and is summarized in \cref{alg:coupled}. There we solve for $\paraOpDiscr[\poroGeneralDiscr]$ and then update $\poroGeneralDiscr$ until the respective error tolerances are fulfilled where the norm $\norm{\cdot}_{\Tspace\REV{(\Omega_T)}}$ is \REV{defined in \eqref{eq:tracenorm}}. A more involved scheme for controlling these tolerances themselves adaptively is presented in \cref{sec:adapt_tol}.

\subsection{Temporal subdivision}\label{sec:temporal}
To ensure convergence of the nonlinear iterations~\eqref{eq:fixedpoint_poro} and~\eqref{eq:fixedpoint_press}, in general we need to split the space-time cylinder $\Omega_T$ into time slices that can be chosen as large as the Lipschitz constants of both iterations allow. Hence their size only depends on the continuous problem, but is independent of the discretization.
However, to maintain control of overall errors, controlling the trace errors at time slice boundaries is crucial. 

Hence doing this re-approximation simply in the norm of $L_2(\Omega_T)$ is not sufficient for controlling the resulting errors of $\gamma_T \poroGeneralDiscr$ in $L_2(\Omega)$-norm, where $\gamma_t: f \mapsto f(t,\cdot)$ is the associated trace operator for each time $t \in [0,T]$. It clearly is bounded as a mapping from $C([0,T];L_2(\Omega))$ to $L_2(\Omega)$ and as a mapping from $\pressSolSp$ to $L_2(\Omega)$, since for the scalar components of elements of $U$ we can apply \cite[Prop.~2.1]{Gantner2021} and the embedding
\begin{equation}\label{eq:imbedding}
	L_2(0,T;H^1_0(\Omega)) \cap H^1(0,T;H^{-1}(\Omega)) \hookrightarrow C([0,T];L_2(\Omega)).
\end{equation}

We thus modify the re-approximation to explicitly account for errors in the trace at $T$ as follows.
Given a partition of $\mathcal{T}_{\REV{\poroGeneral}}$ of $\Omega_T$ into prisms, where \REV{$X_\delta^{m}(\mathcal{T}_{\REV{\poroGeneral}})$} denotes the \REV{discontinuous finite element} space defined in~\REV{\eqref{eq:poroSolSpD}}. Then the projection $\projOp$ is defined as
\begin{align*}
	\projOp f = \argmin_{f_\delta \in X_\delta^{\REV{m}}(\mathcal{T}_{\REV{\poroGeneral}})} \norm{f - f_\delta}_{\Tspace\REV{(\Omega_T)}},
\end{align*}
where
\begin{align}\label{eq:tracenorm}
	\norm{f}_{\Tspace\REV{(\Omega_T)}}^2
	= \norm{f}_{L_2(\Omega_T)}^2 + \norm{\gamma_T f}_{L_2(\Omega)}^2,
\end{align}
is a norm on $C([0,T];L_2(\Omega))$. Note that $\norm{\cdot}_{\Tspace\REV{(\Omega_T)}}$ is induced by an $L_2$-inner product, and thus it is easy to compute the minimizer in the definition of $\Pi$.
Combining this with the adaptive tree refinement of \cite[Sec.~2]{Binev2018}, we obtain a near-best approximation tree. Together with the estimates derived in \cref{sec:nonlin,sec:lipschitz} we \REV{will be able to} control the terminal nonlinear errors of $\poroGeneral$ and $\press$\REV{; details are given in \cref{sec:timeslices}}.

Rather than splitting the domain into time slices, one could also consider a globally coupled approach by solving for subsets of unknowns while freezing the remaining ones. However, due to the global coupling in time in the discretization of the (generally nonlinear) parabolic problem for $\press$, convergence of iterations constructed in this manner is a delicate question. It becomes easier in cases where the parabolic problem is linear, for example when $\sigma$ is constant, but since this is a strong restriction excluding problems with decompaction weakening that are of main interest to us, we do not pursue this direction further.

\section{Convergence}\label{sec:convergence}
In order to prove the convergence of \cref{alg:coupled}, we start by considering a convergence result for an abstract perturbed fixed-point iteration that we subsequently apply to our method.
We assume that $\generalOp$ is a Lipschitz continuous mapping with Lipschitz constant $\generalOpLipschitz < 1$ with respect to \REV{any} suitable norm on a suitably chosen closed set, which implies that Banach's fixed point theorem yields a unique fixed point $\general$ as well as convergence of the fixed point iteration.
Then we can write a discretized iteration in the form
\begin{equation}\label{eq:abstractiter}
	\generalDiscr[(k+1)] = \generalOp(\generalDiscr[(k)]) + \discrErr{k}\,,
\end{equation}
where $\discrErr{k}$ denotes the discretization error. For the resulting error, we then have
$	\bignorm{\general - \generalDiscr[(k+1)]}
	\leq \generalOpLipschitz \bignorm{\general - \generalDiscr[(k)]} + \bignorm{\discrErr{k}}\,,
$
and thus by induction
\begin{equation}\label{eq:errinductive}
	\bignorm{\general - \generalDiscr[(k+1)]}
	\leq (\generalOpLipschitz)^{k+1} \bignorm{\general - \generalDiscr[(0)]} + \sum_{i=0}^{k} (\generalOpLipschitz)^{k-i} \bignorm{\discrErr{i}} \,.
\end{equation}
Furthermore, it is clear that the perturbed fixed point iteration converges if $\bignorm{\discrErr{i}} \rightarrow 0$ for $i\to \infty$. Concerning the speed of convergence, we have the following estimate.
\begin{lemma}\label{lem:conv}
	If for some $\errorRedFac < 1-\generalOpLipschitz$ and all $k \leq N$,
	\begin{align}\label{eq:errorreduction}
		\bignorm{\discrErr{k}}
		\leq \errorRedFac \, (\generalOpLipschitz + \errorRedFac)^k \, \bignorm{\general - \generalDiscr[(0)]},
	\end{align}
	then $\bignorm{\general - \generalDiscr[(k)]} \leq (\generalOpLipschitz + \errorRedFac)^{k} \bignorm{\general - \generalDiscr[(0)]}$
	for $k \leq N+1$.
\end{lemma}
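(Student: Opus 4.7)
The plan is to prove the bound by straightforward induction on $k$, since the statement is tailor-made for it: the hypothesis on $\discrErr{k}$ is precisely the amount of perturbation one can afford if the geometric error reduction with factor $\generalOpLipschitz + \errorRedFac$ is to be propagated by one step.

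First I would verify the base case $k=0$, which holds trivially because $(\generalOpLipschitz+\errorRedFac)^0 = 1$ and the inequality becomes an equality. For the inductive step, assume the claim holds for some $k \leq N$, i.e.
\[
\bignorm{\general - \generalDiscr[(k)]} \leq (\generalOpLipschitz+\errorRedFac)^{k}\bignorm{\general - \generalDiscr[(0)]}.
\]
Starting from the one-step recurrence derived directly before the lemma from \eqref{eq:abstractiter} and the Lipschitz property of $\generalOp$,
\[
\bignorm{\general - \generalDiscr[(k+1)]} \leq \generalOpLipschitz \bignorm{\general - \generalDiscr[(k)]} + \bignorm{\discrErr{k}},
\]
apply the inductive hypothesis to the first term and the assumption \eqref{eq:errorreduction} to the second. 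This yields
\[
\bignorm{\general - \generalDiscr[(k+1)]} \leq \generalOpLipschitz (\generalOpLipschitz+\errorRedFac)^{k}\bignorm{\general - \generalDiscr[(0)]} + \errorRedFac (\generalOpLipschitz+\errorRedFac)^{k}\bignorm{\general - \generalDiscr[(0)]} = (\generalOpLipschitz+\errorRedFac)^{k+1}\bignorm{\general - \generalDiscr[(0)]},
\]
which is the claim for $k+1$, valid as long as $k \leq N$, i.e. for the conclusion up to index $N+1$.

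There is no real obstacle: the roles of $\generalOpLipschitz$ and $\errorRedFac$ simply add up to give the effective contraction factor, and the bound \eqref{eq:errorreduction} is designed so that this addition preserves the geometric decay. The condition $\errorRedFac < 1 - \generalOpLipschitz$ is not needed for the induction itself; it only guarantees that the resulting effective factor $\generalOpLipschitz + \errorRedFac$ is strictly less than one, so that the iterates actually converge to $\general$ as $k\to\infty$ (and it ensures the geometric telescoping of the sum in \eqref{eq:errinductive} remains bounded if one prefers to derive the bound from the closed form rather than by induction).
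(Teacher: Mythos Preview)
Your proof is correct. The approach, however, differs from the paper's. You argue by direct induction on the one-step recurrence
\[
\bignorm{\general - \generalDiscr[(k+1)]} \leq \generalOpLipschitz \bignorm{\general - \generalDiscr[(k)]} + \bignorm{\discrErr{k}},
\]
which immediately collapses the two contributions into the single factor $\generalOpLipschitz+\errorRedFac$. The paper instead starts from the already-unrolled estimate~\eqref{eq:errinductive}, inserts the bound~\eqref{eq:errorreduction} into the sum, and then invokes the telescoping identity
\[
(b-a)\sum_{i=0}^{k} a^{k-i} b^{i} = b^{k+1} - a^{k+1}
\]
with $a=\generalOpLipschitz$ and $b=\generalOpLipschitz+\errorRedFac$ to evaluate the resulting geometric sum exactly. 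Your route is more elementary and slightly shorter; the paper's route has the minor advantage of making explicit how the closed form~\eqref{eq:errinductive} specializes under the hypothesis, which ties the lemma more visibly to the preceding discussion. Your remark that $\errorRedFac<1-\generalOpLipschitz$ is not needed for the induction itself is also accurate for either argument.
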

\begin{proof}
	With~\eqref{eq:errinductive} we obtain
	\begin{align*}
		\bignorm{\general - \generalDiscr[(k+1)]}
		&\leq (\generalOpLipschitz)^{k+1} \bignorm{\general - \generalDiscr[(0)]} + \sum_{i=0}^{k} (\generalOpLipschitz)^{k-i} \bignorm{\discrErr{i}}\\
		&\leq \bigg( (\generalOpLipschitz)^{k+1} + \errorRedFac \sum_{i=0}^{k} (\generalOpLipschitz)^{k-i} (\generalOpLipschitz + \errorRedFac)^i \bigg) \bignorm{\general - \generalDiscr[(0)]}\\
		&= (\generalOpLipschitz + \errorRedFac)^{k+1} \bignorm{\general - \generalDiscr[(0)]},
	\end{align*}
	where in the last step we have used that
	$
		(b-a) \sum_{i = 0}^{k} a^{k-i} \, b^i = b^{k+1} - a^{k+1}
	$
	for $a,b \in \R$ and $k \in \N$.
\end{proof}

In what follows, we apply the above to different contractions $\generalOp$ with correspondingly different mechanisms for ensuring errors $\bignorm{\discrErr{k}}_{\Tspace\REV{(\Omega_T)}}$ below the respective thresholds.

\subsection{Nonlinear least-squares method}\label{sec:nonlin}
First we want to apply the general results about perturbed fixed-point iterations in order to prove convergence of the nonlinear least-squares method presented in \cref{sec:parabolic}.

We now assume a fixed $\overline{\poroGeneral}$ to be given. Let the operator $\pressOp$ be defined, for each given $\overline{\press}$, by $\pressOp(\overline{\press}) = u$, where $u$ is the solution of 
\begin{align*}
	\partial_t \press = \nabla \cdot \Qalpha(\overline{\poroGeneral}) (\nabla \press + \zeta(\overline{\poroGeneral})) - \Qbeta(\overline{\poroGeneral}) \, \frac{\press}{\sigma(\overline{\press})}, \quad \press(0,\cdot) = \press_0\,.
\end{align*}
\begin{lemma}\label{lem:pressContraction}
		For $\Phi$ as defined above, we have
		\begin{align*}
			\norm{\pressOp(\overline{\press}_2) - \pressOp(\overline{\press}_1)}_{\Tspace\REV{(\Omega_T)}}
			\lesssim T^{\frac{1}{2}} \norm{\overline{\press}_2 - \overline{\press}_1}_{L_2(\Omega_T)}\,,
		\end{align*}
		which implies that $\pressOp$ is a contraction with respect to $\norm{\cdot}_{\Tspace\REV{(\Omega_T)}}$ with Lipschitz constant $\pressOpLipschitz < 1$ if $T$ is sufficiently small.
\end{lemma}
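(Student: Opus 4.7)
The plan is a standard parabolic energy estimate for the difference $w := \pressOp(\overline{\press}_2) - \pressOp(\overline{\press}_1)$, combined with a careful splitting of the only $\overline{\press}$-nonlinearity in the definition of $\pressOp$, namely the reaction term $\press/\sigma(\overline{\press})$. Setting $\press_i = \pressOp(\overline{\press}_i)$ and subtracting the two linear parabolic equations satisfied by $\press_1$ and $\press_2$, one finds $w(0,\cdot) = 0$ and, in the weak sense,
\begin{equation*}
	\partial_t w - \nablax\!\cdot\!\bigl(\Qalpha \nablax w\bigr) + \Qbeta\Bigl(\frac{\press_2}{\sigma(\overline{\press}_2)} - \frac{\press_1}{\sigma(\overline{\press}_1)}\Bigr) = 0 ,
\end{equation*}
with homogeneous Dirichlet boundary data.

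Next I would rewrite the reaction difference as
\begin{equation*}
	\frac{\press_2}{\sigma(\overline{\press}_2)} - \frac{\press_1}{\sigma(\overline{\press}_1)} = \frac{w}{\sigma(\overline{\press}_2)} + \press_1 \Bigl(\frac{1}{\sigma(\overline{\press}_2)} - \frac{1}{\sigma(\overline{\press}_1)}\Bigr) ,
\end{equation*}
so that after testing with $w$ the first summand combines with the diffusion term to yield a non-negative contribution (using $\Qalpha > 0$, $\Qbeta \geq 0$ and $\sigma > 0$), while the second is controlled pointwise by $C|\press_1|\,|\overline{\press}_2 - \overline{\press}_1|$ since $|(1/\sigma)'|$ is bounded by virtue of \cref{ass:sigma}. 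Integrating in time from $0$ to $t$ and discarding the non-negative quantities gives
\begin{equation*}
	\tfrac{1}{2}\norm{w(t)}_{L_2(\Omega)}^2 \leq C \int_0^t \int_\Omega |\press_1|\,|\overline{\press}_2 - \overline{\press}_1|\,|w| \D x \D s .
\end{equation*}

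The key non-trivial ingredient is an $L_\infty(\Omega_T)$-bound on $\press_1$: under \cref{ass:parabolic} the spatial Lipschitz regularity of the parabolic iterate, combined with the homogeneous Dirichlet condition and Poincar\'e's inequality, gives $\norm{\press_1}_{L_\infty(\Omega_T)} \leq \overline{C}'$. Then H\"older reduces the right-hand side above to $C\int_0^t \norm{\overline{\press}_2(s) - \overline{\press}_1(s)}_{L_2(\Omega)}\,\norm{w(s)}_{L_2(\Omega)} \D s$, which, interpreted as a differential inequality for $y(t) := \norm{w(t)}_{L_2(\Omega)}^2$, integrates (using $y(0) = 0$ and Cauchy-Schwarz in time) to
\begin{equation*}
	\norm{w(t)}_{L_2(\Omega)} \leq C\sqrt{t}\,\norm{\overline{\press}_2 - \overline{\press}_1}_{L_2(\Omega_T)} \qquad \text{for } 0 \leq t \leq T .
\end{equation*}

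Squaring and integrating this bound in $t$ controls $\norm{w}_{L_2(\Omega_T)}^2$ by $CT^2\norm{\overline{\press}_2 - \overline{\press}_1}_{L_2(\Omega_T)}^2$, while evaluating at $t = T$ controls the trace $\norm{\gamma_T w}_{L_2(\Omega)}^2$ by $CT\norm{\overline{\press}_2 - \overline{\press}_1}_{L_2(\Omega_T)}^2$; summing these two contributions in the norm \eqref{eq:tracenorm} yields the asserted $T^{1/2}$-estimate and hence contractivity for $T$ sufficiently small. The main obstacle I anticipate is the justification of the $L_\infty$-bound on $\press_1$: \cref{ass:parabolic} is formulated for the full nonlinear operator $\paraOp$, so some care is needed to transfer the pertinent regularity to the linearized iterate produced by $\pressOp$. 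The remaining parabolic manipulations are routine, and the $T^{1/2}$ prefactor emerges simply from Cauchy-Schwarz in time.
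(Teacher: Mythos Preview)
Your argument is correct and follows the same opening moves as the paper: form the difference equation for $w = \press_2 - \press_1$, split the reaction term as $\frac{w}{\sigma(\overline{\press}_2)} + \press_1\bigl(\frac{1}{\sigma(\overline{\press}_2)} - \frac{1}{\sigma(\overline{\press}_1)}\bigr)$, and absorb the first summand using the sign of $\Qbeta/\sigma$. Both proofs also rely on an $L_\infty(\Omega_T)$-bound for $\press_1$ to estimate the remaining source by $\norm{\overline{\press}_2 - \overline{\press}_1}_{L_2(\Omega_T)}$; the paper does not justify this bound in any more detail than you do, so your caveat about transferring \cref{ass:parabolic} to the linearized iterate applies equally to the original.

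Where you diverge is in extracting the trace estimate. The paper first records the cruder energy bound $\norm{w}_{L_\infty(0,T;L_2(\Omega))} \lesssim \norm{\text{source}}_{L_2(\Omega_T)}$ (citing \cite{Evans2010}), picks up the $T^{1/2}$ only from the trivial inequality $\norm{w}_{L_2(\Omega_T)} \leq T^{1/2}\norm{w}_{L_\infty(0,T;L_2(\Omega))}$, and then invokes a DiBenedetto-type $L_\infty(\Omega)$ estimate \cite[Thm.~4.2]{Bachmayr2023} to bound $\norm{w(T,\cdot)}_{L_2(\Omega)}$ back by $\norm{w}_{L_2(\Omega_T)}$. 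Your route is more elementary: by keeping the $t$-dependence in the energy inequality and integrating $\frac{d}{dt}\norm{w(t)}_{L_2} \leq C\norm{\overline{\press}_2(t)-\overline{\press}_1(t)}_{L_2}$, you obtain $\norm{w(t)}_{L_2(\Omega)} \leq C\sqrt{t}\,\norm{\overline{\press}_2-\overline{\press}_1}_{L_2(\Omega_T)}$ directly, which delivers both the $L_2(\Omega_T)$ and the $\gamma_T$ contributions with the correct $T^{1/2}$ without any appeal to $L_\infty(\Omega)$-regularity of the difference. This is a cleaner path; the paper's detour through DiBenedetto buys nothing extra here, though the same estimate is genuinely needed later in \cref{lem:poroContraction}.
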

\begin{proof}
	For solutions $\press_1$ and $\press_2$ given $\overline{\press}_1$ and $\overline{\press}_2$, respectively, we have
	\begin{align*}
		\partial_t (\press_2 - \press_1)
		&= \nabla \cdot \Qalpha(\overline{\poroGeneral}) \nabla (\press_2 - \press_1) - \Qbeta(\overline{\poroGeneral}) \, \bl\frac{\press_2}{\sigma(\overline{\press}_2)} - \frac{\press_1}{\sigma(\overline{\press}_1)}\br\\
		&= \nabla \cdot \Qalpha(\overline{\poroGeneral}) \nabla (\press_2 - \press_1) - \Qbeta(\overline{\poroGeneral}) \, \bl\frac{\press_2 - \press_1}{\sigma(\overline{\press}_2)} + \press_1 \bl \frac{1}{\sigma(\overline{\press}_2)}-\frac{1}{\sigma(\overline{\press}_1)}\br\br \,.
	\end{align*}
	Using that $\sigma$ is bounded from below and $\press_1$, $\overline{\poroGeneral}$ uniformly from above, \REV{$\frac{1}{\sigma} \in C^{0,1}(\R)$}, we obtain the Lipschitz estimate
	\begin{align}\label{eq:ucontract1}
		\begin{split}
			\norm{\press_2 - \press_1}_{L_2(\Omega_T)}
			&\leq T^{\frac{1}{2}} \norm{\press_2 - \press_1}_{L_\infty(0,T;L_2(\Omega))}\\
			&\lesssim T^{\frac{1}{2}} \Bignorm{\Qbeta(\overline{\poroGeneral}) \, \press_1 \bl \tfrac{1}{\sigma(\overline{\press}_2)}-\tfrac{1}{\sigma(\overline{\press}_1)}\br}_{L_2(\Omega_T)}\\
			&\lesssim T^{\frac{1}{2}} \Bignorm{\tfrac{1}{\sigma(\overline{\press}_2)}-\tfrac{1}{\sigma(\overline{\press}_1)}}_{L_2(\Omega_T)}\\
			&\lesssim T^{\frac{1}{2}} \norm{\overline{\press}_2 - \overline{\press}_1}_{L_2(\Omega_T)}\,,
		\end{split}
	\end{align}
	\REV{where the second inequality follows by standard energy estimates as in \cite[Chap.~7]{Evans2010} (see also \cite[Thm,~4.1]{Bachmayr2023}).}
	Applying \cite[Thm,~4.2]{Bachmayr2023} (based on \cite[\REV{Thm.~V.3.2}]{DiBenedetto1993}) we immediately get
	\begin{align}\label{eq:ucontract2}
		\begin{split}
			\norm{\press_2(T,\cdot) - \press_1(T,\cdot)}_{L_2(\Omega)}
			&\leq |\Omega|^{1/2} \norm{\press_2(T,\cdot) - \press_1(T,\cdot)}_{L_\infty(\Omega)}\\
			&\lesssim \norm{\press_2 - \press_1}_{L_2(\Omega_T)}\\
			&\lesssim T^{\frac{1}{2}} \norm{\overline{\press}_2 - \overline{\press}_1}_{L_2(\Omega_T)},
		\end{split}
	\end{align}
	with constants independent of $T$. Combining~\eqref{eq:ucontract1} and~\eqref{eq:ucontract2} yields
	the desired contraction property.
\end{proof}

Noting that the numerical error of our linear least-squares solver can be made arbitrarily small, the numerical method~\eqref{eq:lsqnonlin} converges if
$
	\bignorm{\discrErr{k}}_{\Tspace\REV{(\Omega_T)}} \leq \tol_\mathrm{lsq}^{(k)} \to 0
$
for $k \to \infty$. Furthermore, error reduction is obtained by a simpler argument than in \cref{lem:conv}, ensuring that
$
	\bignorm{\discrErr{k}}_{\Tspace\REV{(\Omega_T)}}
	\leq \errorRedFac \bignorm{\press - \pressDiscr^{(k)}}_{\Tspace\REV{(\Omega_T)}}
$
holds for some $\errorRedFac < 1-\pressOpLipschitz$. As shown next, this can be guaranteed by considering the \emph{nonlinear residual} $\norm{\pressLinOp[\press](\press,\pressFlux)-\pressLinOp[\pressDiscr](\pressDiscr,\pressFluxDiscr)}_\pressRanSp$, which can be used as an error estimator.

\begin{proposition}\label{prp:nonlinerr}
	If $(\pressDiscr,\pressFluxDiscr)$ is a solution of~\eqref{eq:leastsquares}, then
	\begin{align}\label{eq:errestimatenonlin}
		\norm{(\press,\pressFlux) - (\pressDiscr,\pressFluxDiscr)}_\pressSolSp
		\eqsim \norm{\pressLinOp[\press](\press,\pressFlux)-\pressLinOp[\pressDiscr](\pressDiscr,\pressFluxDiscr)}_\pressRanSp.
	\end{align}
\end{proposition}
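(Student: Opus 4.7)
The plan is to exploit the linearity of $\pressLinOp[\overline{\press}]$ in its second argument, for each fixed $\overline{\press}$, in order to rewrite the nonlinear residual as a linear image of the error up to a controlled perturbation. Concretely, I would start from the algebraic decomposition
\begin{align*}
	\pressLinOp[\press](\press,\pressFlux) - \pressLinOp[\pressDiscr](\pressDiscr,\pressFluxDiscr)
	= \pressLinOp[\pressDiscr]\bigl((\press,\pressFlux) - (\pressDiscr,\pressFluxDiscr)\bigr) + \bigl(\pressLinOp[\press] - \pressLinOp[\pressDiscr]\bigr)(\press,\pressFlux),
\end{align*}
which is a consequence of the linearity of $\pressLinOp[\pressDiscr]$ in its argument. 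The first term on the right is a linear residual at $\overline{\press} = \pressDiscr$, so that its $\pressRanSp$-norm is equivalent to $\norm{(\press - \pressDiscr,\pressFlux - \pressFluxDiscr)}_\pressSolSp$ by \cref{thm:isomorphism}; importantly, the equivalence constants can be taken uniform in $\pressDiscr$ since $\sigma(\pressDiscr)$ is bounded from above and below by \cref{ass:sigma}.

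Next I would estimate the perturbation $(\pressLinOp[\press] - \pressLinOp[\pressDiscr])(\press,\pressFlux)$. From~\eqref{eq:operator}, only its first component is nonzero and equal to $\Qbeta \, \press \bigl(\tfrac{1}{\sigma(\press)} - \tfrac{1}{\sigma(\pressDiscr)}\bigr)$. The algebraic identity
\begin{align*}
	\press \Bigl(\tfrac{1}{\sigma(\press)} - \tfrac{1}{\sigma(\pressDiscr)}\Bigr) = \bigl(\kappa(\press) - \kappa(\pressDiscr)\bigr) - \tfrac{\press - \pressDiscr}{\sigma(\pressDiscr)},
\end{align*}
combined with the Lipschitz continuity of $\kappa$ with constant $c_L$ (\cref{ass:sigma}) and the uniform lower bound on $\sigma$, then gives a pointwise bound by a constant times $|\press - \pressDiscr|$, hence
\begin{align*}
	\norm{(\pressLinOp[\press] - \pressLinOp[\pressDiscr])(\press,\pressFlux)}_\pressRanSp \leq C \norm{\press - \pressDiscr}_{L_2(\Omega_T)} \leq C \norm{(\press - \pressDiscr, \pressFlux - \pressFluxDiscr)}_\pressSolSp,
\end{align*}
with $C$ depending only on $\norm{\Qbeta}_{L_\infty}$, $c_L$, and $\inf\sigma$, and not on $\press$ or $\pressDiscr$. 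Note in particular that the identity above avoids the need to bound $\press$ in $L_\infty$.

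The upper bound in~\eqref{eq:errestimatenonlin} then follows immediately by the triangle inequality. For the converse direction, the triangle inequality combined with the isomorphism property yields
\begin{align*}
	\norm{(\press - \pressDiscr, \pressFlux - \pressFluxDiscr)}_\pressSolSp \lesssim \norm{\pressLinOp[\press](\press,\pressFlux) - \pressLinOp[\pressDiscr](\pressDiscr,\pressFluxDiscr)}_\pressRanSp + \norm{\press - \pressDiscr}_{L_2(\Omega_T)},
\end{align*}
and the main obstacle is absorbing the $L_2$-perturbation on the right into the left-hand side. My plan is to use the embedding~\eqref{eq:imbedding}, which applied to the scalar component of $\pressSolSp$ gives $\norm{\press - \pressDiscr}_{L_2(\Omega_T)} \leq T^{1/2}\norm{\press - \pressDiscr}_{C([0,T];L_2(\Omega))} \lesssim T^{1/2}\norm{(\press - \pressDiscr, \pressFlux - \pressFluxDiscr)}_\pressSolSp$. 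For $T$ sufficiently small---which is precisely the regime in which the Picard iteration of \cref{lem:pressContraction} is contractive, and in which the scheme is actually applied within each time slice---the perturbation term is absorbed and the lower bound follows, completing the equivalence.
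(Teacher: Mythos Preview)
Your decomposition and the algebraic identity for the perturbation are correct and elegant; in particular, avoiding an $L_\infty$ bound on $\press$ via the identity $\press\bigl(\tfrac{1}{\sigma(\press)}-\tfrac{1}{\sigma(\pressDiscr)}\bigr)=(\kappa(\press)-\kappa(\pressDiscr))-\tfrac{\press-\pressDiscr}{\sigma(\pressDiscr)}$ is nice. The upper bound in~\eqref{eq:errestimatenonlin} follows as you state.

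The gap is in the lower bound. Your absorption argument via small $T$ does not work: the embedding $\pressSolSp\hookrightarrow C([0,T];L_2(\Omega))$ you invoke does \emph{not} have a $T$-independent constant. Take $\press(t,x)=\sin(\pi x)$, $\pressFlux=0$ on $\Omega=(0,1)$: then $\norm{\press}_{C([0,T];L_2)}=2^{-1/2}$ while $\norm{(\press,\pressFlux)}_\pressSolSp^2=\tfrac{T}{2}(1+\pi^2)$, so the embedding constant scales like $T^{-1/2}$, exactly cancelling your claimed $T^{1/2}$ gain. Thus $\norm{\press-\pressDiscr}_{L_2(\Omega_T)}$ cannot be absorbed into the left-hand side by taking $T$ small.

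There is, however, a direct fix already implicit in your computation. Combining your decomposition with your identity, the $\tfrac{1}{\sigma(\pressDiscr)}$ terms cancel and the first component of $\pressLinOp[\press](\press,\pressFlux)-\pressLinOp[\pressDiscr](\pressDiscr,\pressFluxDiscr)$ is exactly
\[
	\Div(\press-\pressDiscr,\pressFlux-\pressFluxDiscr)+\Qbeta\bigl(\kappa(\press)-\kappa(\pressDiscr)\bigr)
	=\Div(\press-\pressDiscr,\pressFlux-\pressFluxDiscr)+\Qbeta\,\Delta_{\kappa,\pressDiscr}(\press)\,(\press-\pressDiscr),
\]
with $\Delta_{\kappa,\pressDiscr}(\press)$ the difference quotient of $\kappa$ as in~\eqref{eq:Delta}. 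Since $\kappa'\in[c_\ell,c_L]$ with $c_\ell>0$ by \cref{ass:sigma}, the coefficient $\Qbeta\,\Delta_{\kappa,\pressDiscr}(\press)$ is nonnegative and uniformly bounded, so the full nonlinear residual equals $\tilde\pressLinOp\bigl((\press,\pressFlux)-(\pressDiscr,\pressFluxDiscr)\bigr)$ for an operator $\tilde\pressLinOp$ of precisely the form covered by \cref{thm:isomorphism}. This immediately gives both directions of~\eqref{eq:errestimatenonlin} with no smallness assumption.

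The paper's own proof follows a more abstract route: it computes the Fr\'echet derivative $\pressNonLinOpDiff[\press]$ (which has reaction coefficient $\Qbeta\,\kappa'(\press)$ rather than the difference quotient), obtains uniform bounds on $\pressNonLinOpDiff$ and its inverse from \cref{thm:isomorphism}, and then --- for the lower bound --- invokes the inverse function theorem together with a compactness argument covering the line segment $[(\press,\pressFlux),(\pressDiscr,\pressFluxDiscr)]$ by neighborhoods where a local inverse exists. This yields the result for all $T$ but is considerably more involved than the direct argument above.
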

\begin{proof}
We calculate the Fr\'{e}chet derivative of the nonlinear operator $\pressLinOp[\press](\press,\pressFlux)$. For $h = (h_1,h_2) \in \pressSolSp$ this yields
\begin{equation*}
	\pressNonLinOpDiff[\press] h
	= \begin{pmatrix}\Div(h_1,h_2) + \Qbeta \frac{\sigma(\press) - \press\sigma'(\press)}{\sigma(\press)^2} h_1\\ h_2 + \Qalpha \nablax h_1\\ h_1(0,\cdot)\end{pmatrix}\,,
\end{equation*}
where $\Qbeta \frac{\sigma(\press) - \press\sigma'(\press)}{\sigma(\press)^2}$ is uniformly bounded due to uniform bounds on $\Qbeta$ and \cref{ass:sigma}.

By \cref{thm:isomorphism}, with homogeneous Dirichlet boundary data, $\pressNonLinOpDiff[\press]: \pressSolSp\to\pressRanSp$ is an isomorphism. Hence we have bounds
\begin{align}\label{eq:uniformbounds}
	\norm{\pressNonLinOpDiff[\press]}_{\pressSolSp\to\pressRanSp} \leq C_1, \quad \norm{\pressNonLinOpDiff[\press]^{-1}}_{\pressRanSp\to\pressSolSp} \leq C_{-1},
\end{align}
with $C_1, C_{-1} >0$ independent of $\press$ due to the uniform bounds on $\frac{\sigma(\press) - \press\sigma'(\press)}{\sigma(\press)^2}$.
Thus
\begin{align*}
	\norm{\pressNonLinOp[\press](\press,\pressFlux)-\pressNonLinOp[\pressDiscr](\pressDiscr,\pressFluxDiscr)}_\pressRanSp
	\leq C_1 \norm{(\press,\pressFlux) - (\pressDiscr,\pressFluxDiscr)}_\pressSolSp \,,
\end{align*}
which yields one side of the estimate~\eqref{eq:errestimatenonlin}.

By the bound on $\pressNonLinOpDiff^{-1}$ in~\eqref{eq:uniformbounds}, we can apply the inverse function theorem (see, for example, \cite[Sec.~9.2]{Luenberger1969}) to conclude that for each $(\press,\pressFlux) \in \pressSolSp$ there exists a neighborhood $B$ of $\pressNonLinOp[\press](\press,\pressFlux)$ and a unique Fr\'{e}chet differentiable function $\pressNonLinOpInv:B\to\pressSolSp$ such that
$
	\pressNonLinOp \circ \pressNonLinOpInv(x) = x
$ and  
$	\pressNonLinOpInvDiff(x) = \pressNonLinOpDiff(\pressNonLinOpInv(x))^{-1}$ for all $x \in B$.
As a consequence of~\eqref{eq:uniformbounds}, we have $\norm{\pressNonLinOpInvDiff(x)}_{B\to\pressSolSp} \leq C_{-1}$ for all $x \in B$.
Applying the same argument as before to $\pressNonLinOpInv$, since
$
	\pressNonLinOpInvDiff(x)^{-1} = \pressNonLinOpDiff(\pressNonLinOpInv(x))
$
is bounded by~\eqref{eq:uniformbounds}, we obtain a unique function $\tilde{\pressNonLinOp}: \tilde{B} \to B$ such that $\pressNonLinOpInv\circ\tilde{\pressNonLinOp}(\press,\pressFlux) = (\press,\pressFlux)$ for all $(\press,\pressFlux)$ in a neighborhood $\tilde{B}$ of $\pressNonLinOpInv(x)$ for $x \in B$. Furthermore, for all $(\press,\pressFlux) \in \tilde{B}$,
\begin{align*}
	\pressNonLinOp[\press](\press,\pressFlux)
	= \pressNonLinOp \circ \pressNonLinOpInv \circ \tilde{\pressNonLinOp}(\press,\pressFlux)
	= \tilde{\pressNonLinOp}(\press,\pressFlux) \,.
\end{align*}
In order to prove the converse estimate in~\eqref{eq:errestimatenonlin}, we consider the line segment
\begin{align*}
	[(\press,\pressFlux),(\pressDiscr,\pressFluxDiscr)] = \left\{ \lambda (\press,\pressFlux) + (1-\lambda) (\pressDiscr,\pressFluxDiscr) \,\colon\, \lambda \in [0,1] \right\},
\end{align*}
which is compact in $\pressSolSp$ and hence admits a finite covering of $[(\press,\pressFlux),(\pressDiscr,\pressFluxDiscr)]$ by neighborhoods $\tilde{B}$ where $\pressNonLinOpInv$ and $\tilde{\pressNonLinOp} = \pressNonLinOp$ are defined. We thus obtain a well-defined map $\pressNonLinOpInvDiff$ on the entire line segment,
which in addition is uniformly bounded by $C_{-1}$, so that
\begin{align*}
	\norm{(\press,\pressFlux) - (\pressDiscr,\pressFluxDiscr)}_\pressSolSp
	&= \norm{\pressNonLinOpInv \circ \pressNonLinOp[\press](\press,\pressFlux) - \pressNonLinOpInv \circ \pressNonLinOp[\pressDiscr](\pressDiscr,\pressFluxDiscr)}_\pressSolSp\\
	&\leq C_{-1} \norm{\pressNonLinOp[\press](\press,\pressFlux) - \pressNonLinOp[\pressDiscr](\pressDiscr,\pressFluxDiscr)}_\pressRanSp \,. \qedhere
\end{align*}
\end{proof}

Note that due to the embedding~\eqref{eq:imbedding} this also yields an estimate of $\norm{\press-\pressDiscr^{(\ell)}}_{\Tspace\REV{(\Omega_T)}}$.

\begin{theorem}\label{thm:pressdiscrerr}
	Let $\overline{\poroGeneral}$ be fixed, and with $\pressOpLipschitz$ from \cref{lem:pressContraction}, let $\errorRedFac<1-\pressOpLipschitz$ and $\tol_\mathrm{lsq}$ be chosen such that
	\begin{align*}
		\bignorm{\discrErr{\ell}}_{\Tspace\REV{(\Omega_T)}}
		\lesssim \mathrm{tol}_\mathrm{lsq}
		\lesssim \errorRedFac \, \bignorm{G[\pressDiscr^{(\ell)}](\pressDiscr^{(\ell)},\pressFluxDiscr^{(\ell)}) - R}_\pressRanSp,
	\end{align*}
with the constants from~\eqref{eq:errestimate} and~\eqref{eq:errestimatenonlin} depending on $\norm{\overline{\poroGeneral}}_{L_\infty(\Omega_T)}$ and $\norm{1/\overline{\poroGeneral}}_{L_\infty(\Omega_T)}$. Then \[  \bignorm{\press - \pressDiscr^{(\ell+1)}}_{\Tspace\REV{(\Omega_T)}} \leq (\pressOpLipschitz + \errorRedFac) \bignorm{\press- \pressDiscr^{(\ell)}}_{\Tspace\REV{(\Omega_T)}}\]  \REV{with} $\press = \paraOp[\overline{\poroGeneral}]$ and $\pressDiscr^{(\ell)}$ as in \cref{def:paraOp}.
\end{theorem}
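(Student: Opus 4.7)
My plan is to prove the single-step error reduction by instantiating the perturbed fixed-point framework of \cref{lem:conv} with the contraction $\pressOp$ established in \cref{lem:pressContraction}. Since $\press = \paraOp[\overline{\poroGeneral}]$ is the unique fixed point of $\pressOp$, I would split via the triangle inequality through $\pressOp(\pressDiscr^{(\ell)})$:
\begin{equation*}
	\bignorm{\press - \pressDiscr^{(\ell+1)}}_{\Tspace(\Omega_T)}
	\leq \bignorm{\pressOp(\press) - \pressOp(\pressDiscr^{(\ell)})}_{\Tspace(\Omega_T)} + \bignorm{\pressOp(\pressDiscr^{(\ell)}) - \pressDiscr^{(\ell+1)}}_{\Tspace(\Omega_T)}.
\end{equation*}
By \cref{lem:pressContraction}, the first summand is bounded by $\pressOpLipschitz\bignorm{\press - \pressDiscr^{(\ell)}}_{\Tspace(\Omega_T)}$, and the second is precisely $\bignorm{\discrErr{\ell}}_{\Tspace(\Omega_T)}$, where $\discrErr{\ell} = \pressDiscr^{(\ell+1)} - \pressOp(\pressDiscr^{(\ell)})$ is the linear least-squares discretization error at step $\ell$. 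Thus the proof reduces to showing that this discretization error is bounded by $\errorRedFac \bignorm{\press - \pressDiscr^{(\ell)}}_{\Tspace(\Omega_T)}$, matching the setup of \cref{lem:conv}.

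Next I would control $\bignorm{\discrErr{\ell}}_{\Tspace(\Omega_T)}$ by the tolerance $\tol_\mathrm{lsq}$. The pair $(\pressDiscr^{(\ell+1)},\pressFluxDiscr^{(\ell+1)}) = \paraOpDiscr[\overline{\poroGeneral},\pressDiscr^{(\ell)}]$ satisfies the linear residual bound \eqref{eq:lsqlin}, and \cref{thm:isomorphism} applied to $G[\pressDiscr^{(\ell)}]$ yields $\bignorm{(\pressOp(\pressDiscr^{(\ell)}),\pressFlux_{\mathrm{lin}}) - (\pressDiscr^{(\ell+1)},\pressFluxDiscr^{(\ell+1)})}_{\pressSolSp} \eqsim \bignorm{G[\pressDiscr^{(\ell)}](\pressDiscr^{(\ell+1)},\pressFluxDiscr^{(\ell+1)}) - R}_\pressRanSp \leq \tol_\mathrm{lsq}$, where $\pressFlux_{\mathrm{lin}}$ is the exact flux associated with $\pressOp(\pressDiscr^{(\ell)})$. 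The embedding \eqref{eq:imbedding} then gives $\bignorm{\discrErr{\ell}}_{\Tspace(\Omega_T)} \lesssim \bignorm{\discrErr{\ell}}_\pressSolSp \lesssim \tol_\mathrm{lsq}$, which is the first $\lesssim$ in the hypothesis.

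Finally, I would combine the upper bound $\tol_\mathrm{lsq} \lesssim \errorRedFac \bignorm{G[\pressDiscr^{(\ell)}](\pressDiscr^{(\ell)},\pressFluxDiscr^{(\ell)}) - R}_\pressRanSp$ with \cref{prp:nonlinerr}: using $R = G[\press](\press,\pressFlux)$, the nonlinear residual is equivalent to $\bignorm{(\press,\pressFlux) - (\pressDiscr^{(\ell)},\pressFluxDiscr^{(\ell)})}_\pressSolSp$, and the remark following \cref{prp:nonlinerr} (together with \eqref{eq:imbedding}) delivers an estimate in terms of $\bignorm{\press - \pressDiscr^{(\ell)}}_{\Tspace(\Omega_T)}$. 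Chaining the constants from \eqref{eq:errestimate} and \eqref{eq:errestimatenonlin} (which is precisely what the hypothesis absorbs into the prescribed $\errorRedFac$) yields $\bignorm{\discrErr{\ell}}_{\Tspace(\Omega_T)} \leq \errorRedFac \bignorm{\press - \pressDiscr^{(\ell)}}_{\Tspace(\Omega_T)}$, from which the claim follows. The main obstacle is the bookkeeping of these constants: the embedding is one-sided ($\pressSolSp \hookrightarrow C([0,T];L_2(\Omega))$) while the contraction lives in the weaker $T$-norm, so one must verify that the calibration of $\tol_\mathrm{lsq}$ through the computable residual is indeed sufficient to yield a genuine reduction factor in the $T$-norm, which is the content of the hypothesis on the constants from \eqref{eq:errestimate} and \eqref{eq:errestimatenonlin}.
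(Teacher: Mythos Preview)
Your proposal is correct and follows essentially the same route as the paper, which presents \cref{thm:pressdiscrerr} without a separate proof as a direct consequence of the preceding discussion: the perturbed fixed-point splitting at the start of \cref{sec:convergence}, the contraction from \cref{lem:pressContraction}, the linear residual equivalence~\eqref{eq:errestimate}, and the nonlinear residual equivalence of \cref{prp:nonlinerr} combined with the embedding~\eqref{eq:imbedding}. Your closing remark about the one-sided embedding and the calibration of constants is exactly the point the paper leaves implicit in the phrase ``with the constants from~\eqref{eq:errestimate} and~\eqref{eq:errestimatenonlin}''.
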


\subsection{Lipschitz estimate}\label{sec:lipschitz}
We now show that the operator $\poroOp$ defined by
\begin{align*}
	\poroOp(\poroGeneral)(t,\cdot)
	= \poroGeneral_{0} - Q \left( \paraOp[\poroGeneral](t,\cdot) - \press_{0} \right) - \int_0^t \beta(\poroGeneral(s,\cdot)) \, \kappa(\paraOp[\poroGeneral](s,\cdot)) \D s
\end{align*}
is a contraction with respect to $\norm{\cdot}_{\Tspace\REV{(\Omega_T)}}$ if $T$ is chosen sufficiently small. 

In \cite[Sec.~4]{Bachmayr2023}, such a property is established with respect to a piecewise $C^{0,\gamma}_\textrm{par}$-norm. 
With respect to the weaker $\Tspace$-norm as used here, contractivity is not known under general assumptions.
\REV{%
However, we obtain the desired estimate by using \cref{ass:parabolic}.
}

\begin{lemma}\label{lem:poroContraction}
	\REV{Let \cref{ass:parabolic,ass:alphabeta,ass:sigma} be satisfied and $\poroGeneral_1, \poroGeneral_2 \in L_{\infty}(\Omega_T)$. Then} it holds that
	\begin{align*}
		\norm{\poroOp(\poroGeneral_2) - \poroOp(\poroGeneral_1)}_{\Tspace\REV{(\Omega_T)}}
		\lesssim T^{\frac{1}{2}} \norm{\poroGeneral_2 - \poroGeneral_1}_{L_2(\Omega_T)}\,,
	\end{align*}
	which implies that $\poroOp$ is a contraction with respect to $\norm{\cdot}_{\Tspace\REV{(\Omega_T)}}$ with Lipschitz constant $\poroOpLipschitz < 1$ if $T$ is small enough.
\end{lemma}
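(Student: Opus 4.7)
The plan is to decompose
\[
	\poroOp(\poroGeneral_2) - \poroOp(\poroGeneral_1)
	= -Q\bl\paraOp[\poroGeneral_2] - \paraOp[\poroGeneral_1]\br - \int_0^t \bl \beta(\poroGeneral_2)\kappa(\paraOp[\poroGeneral_2]) - \beta(\poroGeneral_1)\kappa(\paraOp[\poroGeneral_1]) \br \D s
\]
and to estimate each part separately in $\norm{\cdot}_{\Tspace(\Omega_T)}$, along the lines of the proof of \cref{lem:pressContraction}.

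Setting $w = \paraOp[\poroGeneral_2] - \paraOp[\poroGeneral_1]$, I would first derive the parabolic equation satisfied by $w$; its source terms are products of $\poroGeneral_2 - \poroGeneral_1$ with factors such as $\nablax \paraOp[\poroGeneral_1]$, $\zeta(\poroGeneral_i)$ and $\kappa(\paraOp[\poroGeneral_i])$. By \cref{ass:parabolic}, $\nablax \paraOp[\poroGeneral_1]$ is in $L_\infty(\Omega_T)$, and under \cref{ass:alphabeta,ass:sigma} together with the uniform bounds on $\poroGeneral_i$ the remaining coefficients are bounded and Lipschitz on the relevant ranges. Standard energy estimates (testing with $w$, integrating by parts, and applying Young's and Gronwall's inequalities) then yield
\[
	\norm{w}_{L_\infty(0,T;L_2(\Omega))} + \norm{\nablax w}_{L_2(\Omega_T)} \lesssim \norm{\poroGeneral_2 - \poroGeneral_1}_{L_2(\Omega_T)},
\]
so that $\norm{w}_{L_2(\Omega_T)} \leq T^{1/2} \norm{w}_{L_\infty(0,T;L_2(\Omega))} \lesssim T^{1/2} \norm{\poroGeneral_2 - \poroGeneral_1}_{L_2(\Omega_T)}$. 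For the terminal trace I would invoke \cite[Thm.~V.3.2]{DiBenedetto1993} as in \cite[Thm.~4.2]{Bachmayr2023} (which is exactly where \cref{ass:parabolic} enters) to obtain $\norm{w(T,\cdot)}_{L_\infty(\Omega)} \lesssim \norm{w}_{L_2(\Omega_T)}$, and hence $\norm{w(T,\cdot)}_{L_2(\Omega)} \lesssim T^{1/2} \norm{\poroGeneral_2 - \poroGeneral_1}_{L_2(\Omega_T)}$.

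For the integral term, let $F = \beta(\poroGeneral_2)\kappa(\paraOp[\poroGeneral_2]) - \beta(\poroGeneral_1)\kappa(\paraOp[\poroGeneral_1])$. Using the Lipschitz continuity of $\beta$ and $\kappa$ on the ranges determined by \cref{ass:parabolic,ass:sigma}, I obtain the pointwise bound $|F| \lesssim |\poroGeneral_2 - \poroGeneral_1| + |w|$, so $\norm{F}_{L_2(\Omega_T)} \lesssim \norm{\poroGeneral_2 - \poroGeneral_1}_{L_2(\Omega_T)}$ by the previous step (absorbing the $T^{1/2}$ factor from $w$). Cauchy-Schwarz in time then gives $\bignorm{\int_0^t F(s,\cdot)\D s}_{L_2(\Omega_T)} \leq T\,\norm{F}_{L_2(\Omega_T)}$ and $\bignorm{\int_0^T F(s,\cdot)\D s}_{L_2(\Omega)} \leq T^{1/2}\,\norm{F}_{L_2(\Omega_T)}$. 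Collecting both parts yields the claim, and the contractivity of $\poroOp$ on $\Tspace(\Omega_T)$ for sufficiently small $T$ follows immediately.

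The main obstacle lies in obtaining the correct $T^{1/2}$-scaling for the terminal trace $\norm{w(T,\cdot)}_{L_2(\Omega)}$. A direct energy argument using Gronwall only produces $\norm{w(T,\cdot)}_{L_2(\Omega)} \lesssim \norm{\poroGeneral_2 - \poroGeneral_1}_{L_2(\Omega_T)}$ without the required $T^{1/2}$ prefactor, which would not suffice to make $\poroOp$ a contraction. It is precisely the $W^{1,\infty}$ bound supplied by \cref{ass:parabolic} that enables the DiBenedetto-type parabolic $L_\infty$-estimate, thereby converting the $T^{1/2}$-smallness of $\norm{w}_{L_2(\Omega_T)}$ into the needed $T^{1/2}$-smallness of $\norm{w(T,\cdot)}_{L_2(\Omega)}$.
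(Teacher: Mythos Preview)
Your proposal is correct and follows essentially the same route as the paper: derive the difference equation for $w=\paraOp[\poroGeneral_2]-\paraOp[\poroGeneral_1]$, use the $L_\infty$-bound on $\nablax\paraOp[\poroGeneral_i]$ from \cref{ass:parabolic} in a standard energy estimate to get $\norm{w}_{L_2(\Omega_T)}\lesssim T^{1/2}\norm{\poroGeneral_2-\poroGeneral_1}_{L_2(\Omega_T)}$, then invoke the DiBenedetto-type $L_\infty$-bound \cite[Thm.~4.2]{Bachmayr2023} for the terminal trace, and finally handle the integral term by Lipschitz bounds and Cauchy--Schwarz in time. Your closing remark that the $W^{1,\infty}$ assumption is precisely what upgrades the $T^{1/2}$-smallness of $\norm{w}_{L_2(\Omega_T)}$ to that of $\norm{w(T,\cdot)}_{L_2(\Omega)}$ is exactly the point.
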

\begin{proof}
	\REV{We write $\press_1 = \paraOp[\poroGeneral_1]$, $\press_2 = \paraOp[\poroGeneral_2]$ for the solutions of~\eqref{eq:weak} for given $\poroGeneral_1, \poroGeneral_2$.}
	Considering a difference equation similar to~\cite[Sec.~4]{Bachmayr2023}, we obtain
	\begin{multline}
		\partial_t (\press_2 - \press_1) - \nabla \cdot (\Qalpha(\poroGeneral_1) \nabla(\press_2 - \press_1)) + \Qbeta(\poroGeneral_1) \Delta_{\kappa, \press_1}(\press_2) (\press_2 - \press_1)\\
		\begin{split}
			=\;& \nabla \cdot \left( (\Qalpha(\poroGeneral_2) - \Qalpha(\poroGeneral_1)) \nabla \press_2 \right) - \kappa(\press_2) (\Qbeta(\poroGeneral_2) - \Qbeta(\poroGeneral_1))\\
			&+ \nabla \cdot \left( \Qalpha(\poroGeneral_2) \zeta(\poroGeneral_2) - \Qalpha(\poroGeneral_1) \zeta(\poroGeneral_1) \right),
		\end{split}
	\end{multline}
	where $\Delta_{\kappa,y}$ is defined as
	\begin{align}\label{eq:Delta}
		\Delta_{\kappa,y}(x) =
		\begin{cases}
			\frac{\kappa(x)-\kappa(y)}{x-y} &\text{if } x \neq y,\\
			\kappa'(y) &\text{else.}
		\end{cases}
	\end{align}
	By standard regularity theory (see, for example, \cite[Thm.~4.1]{Bachmayr2023}), 
	\begin{equation}\label{eq:estpress1}
	\begin{aligned}
		\norm{\press_2 - \press_1}_{L_2(\Omega_T)}
			&\leq T^{1/2} \norm{\press_2 - \press_1}_{L_\infty(0,T;L_2(\Omega))}\\
			&\lesssim T^{1/2} \Big( \norm{(\Qalpha(\poroGeneral_2) - \Qalpha(\poroGeneral_1)) \, \nabla \press_2}_{L_2(\Omega_T)} +  \bignorm{\kappa(\press_2) (\Qbeta(\poroGeneral_2) - \Qbeta(\poroGeneral_1))}_{L_2(\Omega_T)}\\
			&\phantom{\lesssim T^{1/2} \Big(}\; + \norm{\Qalpha(\poroGeneral_2) \zeta(\poroGeneral_2) - \Qalpha(\poroGeneral_1) \zeta(\poroGeneral_1)}_{L_2(\Omega_T)} \Big)\\
			&\lesssim T^{1/2} \Big( \norm{\poroGeneral_2 - \poroGeneral_1}_{L_2(\Omega_T)} \norm{\nabla \press_2}_{L_\infty(\Omega_T)}\\
			&\phantom{\leq T^{1/2} \Big(}\;+ \norm{\poroGeneral_2 - \poroGeneral_1}_{L_2(\Omega_T)} \norm{\kappa(\press_2)}_{L_\infty(\Omega_T)} + \norm{\poroGeneral_2 - \poroGeneral_1}_{L_2(\Omega_T)} \Big)\\
			&\lesssim T^{1/2} \norm{\poroGeneral_2 - \poroGeneral_1}_{L_2(\Omega_T)}.
	\end{aligned}
	\end{equation}
	As before in \cref{lem:pressContraction}, we apply \cite[Thm.~4.2]{Bachmayr2023} \REV{(based on \cite[Thm.~V.3.2]{DiBenedetto1993})} to get
	\begin{align}\label{eq:estpress2}
		\begin{split}
			\norm{\press_2(T,\cdot) - \press_1(T,\cdot)}_{L_2(\Omega)}
			&\leq |\Omega|^{1/2} \norm{\press_2(T,\cdot) - \press_1(T,\cdot)}_{L_\infty(\Omega)}\\
			&\lesssim \norm{\press_2 - \press_1}_{L_2(\Omega_T)}\\
			&\lesssim T^{\frac{1}{2}} \norm{\poroGeneral_2 - \poroGeneral_1}_{L_2(\Omega_T)}\,,
		\end{split}
	\end{align}
	which implies the contraction property with respect to $\norm{\cdot}_{\Tspace\REV{(\Omega_T)}}$.
	To this end, note that
	\begin{multline}\label{eq:estporo}
		\norm{\poroOp(\poroGeneral_2) - \poroOp(\poroGeneral_1)}_{\Tspace\REV{(\Omega_T)}}^2
		\lesssim T \norm{\poroGeneral_2 - \poroGeneral_1}_{L_2(\Omega_T)}^2\\
		\begin{split}
			&+  \biggnorm{\int_0^t \beta(\poroGeneral_2(s,\cdot)) \, \kappa(\paraOp[\poroGeneral_2](s,\cdot)) - \beta(\poroGeneral_1(s,\cdot)) \, \kappa(\paraOp[\poroGeneral_1](s,\cdot)) \D s}_{L_2(\Omega_T)}^2\\
			&+  \biggnorm{\int_0^T \beta(\poroGeneral_2(s,\cdot)) \, \kappa(\paraOp[\poroGeneral_2](s,\cdot)) - \beta(\poroGeneral_1(s,\cdot)) \, \kappa(\paraOp[\poroGeneral_1](s,\cdot)) \D s}_{L_2(\Omega)}^2,
		\end{split}
	\end{multline}
	where we have estimated $\norm{\paraOp[\poroGeneral_1] - \paraOp[\poroGeneral_2]}_{\Tspace\REV{(\Omega_T)}}$ using~\eqref{eq:estpress1} and~\eqref{eq:estpress2}, and we estimate the second term of~\eqref{eq:estporo} by
	\begin{multline*}
		\biggnorm{\int_0^t \beta(\poroGeneral_2(s,\cdot)) \, \kappa(\paraOp[\poroGeneral_2](s,\cdot)) - \beta(\poroGeneral_1(s,\cdot)) \, \kappa(\paraOp[\poroGeneral_1](s,\cdot)) \D s}_{L_2(\Omega_T)}^2\\
		\begin{split}
			&\lesssim \int_{0}^{T} \left( \int_{0}^{t} \norm{\poroGeneral_2(s,\cdot) - \poroGeneral_1(s,\cdot)}_{L_2(\Omega)} \D s \right)^2 \D t\\
			&\leq \int_{0}^{T} t \left( \int_{0}^{t} \norm{\poroGeneral_2(s,\cdot) - \poroGeneral_1(s,\cdot)}_{L_2(\Omega)}^2 \D s \right) \D t\\
			&\leq T^2 \norm{\poroGeneral_2 - \poroGeneral_1}_{L_2(\Omega_T)}^2.
		\end{split}
	\end{multline*}
	A similar argument yields an estimate of the last term of~\eqref{eq:estporo} by $T \norm{\poroGeneral_2 - \poroGeneral_1}_{L_2(\Omega_T)}^2$ which concludes the proof.
\end{proof}
The above contractivity property can be combined well with the approximation $\paraOpDiscr$ of $\paraOp$ as in \cref{def:paraOp}, with error control in matching norms.
\begin{remark}
	Note that an analogous argument also gives
	\begin{align*}
		\norm{\press_2 - \press_1}_{C([0,T];L_2(\Omega))}
		\lesssim T^{1/2} \norm{\poroGeneral_2 - \poroGeneral_1}_{C([0,T];L_2(\Omega))}\,,
	\end{align*}
	which implies convergence of the fixed point iteration for $\poroOp$ in $C([0,T];L_2(\Omega))$. Furthermore the nonlinear least-squares method yields control of the discretization error in that norm because of \cref{prp:nonlinerr}. But since it is more difficult and expensive to perform a re-approximation step such that the $C([0,T];L_2(\Omega))$ error can be controlled, we will not consider it in this work.
\end{remark}

\subsection{Main result}\label{sec:interpolation}
In order to guarantee an error reduction in \cref{alg:coupled} to solve the full viscoelastic model we need to bound the errors $\bignorm{\discrErr{k}}_{\Tspace\REV{(\Omega_T)}}$ in every step of the fixed-point iteration.
We therefore consider the approximated iterate~\eqref{eq:poroDiscr}, which reads
\begin{equation*}
		\poroGeneralDiscr^{(k+1)}
		=\, \projOp\bigg( \poroGeneral_{0} - Q \Big( \paraOpDiscr[\poroGeneralDiscr^{(k)}](t,\cdot) - \press_{0} \Big) - \int_0^t \interOp\Big(\beta(\poroGeneralDiscr^{(k)}(s,\cdot)) \, \kappa(\paraOpDiscr[\poroGeneralDiscr^{(k)}](s,\cdot))\Big) \D s \bigg),
\end{equation*}
where $\paraOpDiscr$ denotes the least-squares solution operator defined in \cref{def:paraOp}, $\projOp$ is the adaptive projection which we introduced in \cref{sec:temporal}, and $\interOp$ is interpolation with high-order polynomials on each element.

In view of the given error tolerances for $\projOp$ and $\paraOpDiscr$ due to \cref{prp:nonlinerr}, it remains to consider the errors of the interpolation $\interOp$. We use the following result from~\cite[Thm.~3.1]{Moessner2009} (see also, e.g., \cite[Sec.~4]{Mason1980}).
\begin{theorem}\label{thm:interpolation}
	%Let $f \in \REV{\bigotimes_{i=1}^{d} W^{n_i}_{\infty}[0,h_i]}$ on the box $H=[0, h_1]\times\cdots\times[0,h_d] \subset \R^d$. 
	\REV{Let $H=[0, h_1]\times\cdots\times[0,h_d] \subset \R^d$ and with $n \in \N_0^d$, let} \[\REV{ f \in \{  g \in L_\infty(H) \colon \partial^{n_i}_{x_i} f \in L_\infty(H), \,i=1,\ldots,d \}. }  \]  	Furthermore, we consider interpolation points $\gamma_r^{\REV{1}} < \ldots < \gamma_r^{n_r}$ with associated La\-gran\-ge basis functions $\REV{\mathcal{L}}_r^{\REV{1}}, \ldots, \REV{\mathcal{L}}_r^{n_r}$ on $[0,h_r]$ for each $r=1,\ldots,d$. Then for the unique interpolant $I[f]$, we have
	\begin{align}\label{eq:interpolation_error}
		\norm{f - I[f]}_{L_\infty(H)}
		\leq \sum_{r=1}^d L_r^d h_r^{\REV{n_r}}\bignorm{\partial_{x_r}^{\REV{n_r}} f}_{L_\infty(H)},
	\end{align}
	where
	\begin{align*}
		L_r^d
		= \frac{\norm{(\cdot-\gamma_r^{\REV{1}}) \cdot \ldots \cdot (\cdot-\gamma_r^{n_r})}_{L_\infty{[0,h_r]}}}{h_r^{\REV{n_r}} n_r!} \, \norm{\REV{\mathcal{L}}_{r+1}}_{L_\infty{[0,h_{r+1}]}} \cdot\ldots\cdot \norm{\REV{\mathcal{L}}_d}_{L_\infty{[0,h_d]}},
	\end{align*}
	and $\REV{\mathcal{L}}_r = \sum_{i=1}^{n_r} |\REV{\mathcal{L}}_r^i|$ denotes the Lebesgue function.
\end{theorem}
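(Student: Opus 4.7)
The plan is to reduce the multivariate error bound to a sum of one-dimensional Lagrange interpolation errors by a telescoping argument over the coordinate directions. Write $I_r$ for the univariate Lagrange interpolation operator in the $r$-th coordinate on $[0,h_r]$ using the nodes $\gamma_r^1,\ldots,\gamma_r^{n_r}$, applied pointwise along the other variables. Because the interpolation points form a tensor grid, these one-dimensional operators commute and their composition is exactly the full tensor-product interpolant, $I = I_d \circ I_{d-1} \circ \cdots \circ I_1$. With the convention $I_d\cdots I_{d+1} := \mathrm{Id}$, I would introduce the telescoping sum
\begin{align*}
 f - I[f]
 = \sum_{r=1}^{d} \bigl( I_d\cdots I_{r+1} - I_d\cdots I_{r}\bigr) f
 = \sum_{r=1}^{d} I_d\cdots I_{r+1}\,(\mathrm{Id} - I_r)\, f,
\end{align*}
which is easily verified by telescoping and coincides with $f-I[f]$ at $r=1$.

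Next I would estimate each summand in $L_\infty(H)$ separately. For the outer factor, the standard Lebesgue-constant bound for univariate interpolation gives $\|I_s g\|_{L_\infty([0,h_s])}\le \|\mathcal{L}_s\|_{L_\infty([0,h_s])}\|g\|_{L_\infty([0,h_s])}$ applied slice-wise, so composing yields the operator norm estimate
\begin{align*}
 \bignorm{I_d\cdots I_{r+1}}_{L_\infty(H)\to L_\infty(H)}
 \;\le\; \norm{\mathcal{L}_{r+1}}_{L_\infty([0,h_{r+1}])}\cdots\norm{\mathcal{L}_d}_{L_\infty([0,h_d])}.
\end{align*}
For the inner factor $(\mathrm{Id}-I_r)f$, I would apply the classical one-dimensional Lagrange interpolation remainder in $L_\infty$ (via the Peano kernel, which requires only $\partial_{x_r}^{n_r} f \in L_\infty$) slice-wise in $x_r$, obtaining
\begin{align*}
 \bignorm{(\mathrm{Id}-I_r)f}_{L_\infty(H)}
 \;\le\; \frac{\norm{(\cdot-\gamma_r^1)\cdots(\cdot-\gamma_r^{n_r})}_{L_\infty([0,h_r])}}{n_r!}\,\bignorm{\partial_{x_r}^{n_r} f}_{L_\infty(H)}.
\end{align*}

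Combining these two estimates in each term of the telescoping sum, the prefactors assemble exactly into $L_r^d\, h_r^{n_r}$ as defined in the statement, and the claim~\eqref{eq:interpolation_error} follows. The only step requiring some care is the one-dimensional remainder bound under the weak assumption $\partial_{x_r}^{n_r} f \in L_\infty$ (as opposed to $C^{n_r}$); this is handled by the Peano kernel representation, which extends the elementary $C^{n_r}$ proof to the $L_\infty$ setting without modification. Otherwise the argument is purely algebraic telescoping plus two classical univariate estimates applied slice-wise, so I do not anticipate a substantial obstacle beyond bookkeeping of indices and norms.
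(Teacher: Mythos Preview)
Your telescoping argument is correct and is precisely the standard route to this result. Note, however, that the paper does not supply its own proof of this theorem: it is quoted directly from \cite[Thm.~3.1]{Moessner2009} (with a pointer to \cite[Sec.~4]{Mason1980}), and the text immediately proceeds to specialize the bound to Chebyshev--Gauss--Lobatto nodes. Your decomposition $f-I[f]=\sum_{r=1}^d I_d\cdots I_{r+1}(\mathrm{Id}-I_r)f$, followed by the Lebesgue-constant bound on the outer factor and the one-dimensional remainder bound on the inner factor, is exactly the argument used in the cited reference, so there is no substantive difference to report.
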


For $N$ Chebyshev-Gauss-Lobatto points
\begin{align*}
	\gamma^i = \cos\!\left(\frac{\pi \REV{(i-1)}}{N\REV{-1}}\right),\quad i = \REV{1},\ldots,N,
\end{align*}
in each dimension (that is, $n_r = N$ for $r = 1,\ldots,d$), \eqref{eq:interpolation_error} can be simplified due to the estimate
\REV{%
\begin{align*}
	\norm{\REV{\mathcal{L}}_{r}}_{L_\infty{[0,h_{r}]}}
	\leq \frac{2}{\pi} \log(N) + 1
\end{align*}
from \cite[Thm.~15.2]{Trefethen2019} for all $r = 1,\ldots,d$ since it is invariant under affine transformations. Furthermore it is well-known (see, for example \cite[Ex.~4.1]{Trefethen2019}), that
\begin{align*}
	\norm{(\cdot-\gamma_r^1) \cdot \ldots \cdot (\cdot-\gamma_r^N)}_{L_\infty{[0,h_r]}} \leq 2^{2-N} \left(\frac{h_r}{2}\right)^{\!N}
\end{align*}
where $\gamma_r^i$ denote the scaled and shifted versions of $\gamma^i$ onto $[0,h_r]$ for $i = 1,\ldots,N$. Combining both estimates gives
}
\begin{align*}
	L_r^d
	\leq \frac{1}{4^{\REV{N-1}} \REV{N}!} \REV{\prod_{q=r+1}^{d} \left( \frac{2}{\pi} \log(N) + 1 \right)}
	= \frac{1}{4^{N\REV{-1}} N!} \left( \frac{2}{\pi} \log(N) + 1 \right)^{d-r}\!\!.
\end{align*}
This yields the error bound
\begin{align*}
	\norm{f - I[f]}_{L_\infty(H)}
	\leq \frac{1}{4^{N\REV{-1}} N!} \sum_{r=1}^d \left( \frac{2}{\pi} \log(N) + 1 \right)^{d-r} h_r^{\REV{N}}\bignorm{\partial_{x_r}^{\REV{N}} f}_{L_\infty(H)}.
\end{align*}
Since the functions we consider are smooth on each element, the sum of $\norm{\cdot}_{\Tspace\REV{(\Omega_T)}}$-errors can be brought below any chosen $\tol_\mathrm{int}>0$ by choosing $N$ sufficiently large.

By combining the previous observations with the results from \cref{sec:nonlin,sec:lipschitz}, we obtain the following main result on the convergence of the adaptive scheme \REV{as an immediate consequence}.
\begin{theorem}\label{thm:porodiscrerr}
	With $\poroOpLipschitz$ from \cref{lem:poroContraction}, let $\errorRedFac<1-\poroOpLipschitz$ and let $\tol_\mathrm{proj}$, $\tol_\mathrm{int}$, $\tol_\press$ be chosen such that
	\begin{align*}
		\bignorm{\discrErr{k}}_{\Tspace\REV{(\Omega_T)}}
		\;\REV{\leq}\; \tol_\mathrm{proj} + T \, \tol_\mathrm{int} + \REV{C} \, \tol_\press \, (Q + T c_L)
		\leq \errorRedFac \, (\poroOpLipschitz + \errorRedFac)^k \, \bignorm{\poroGeneral - \poroGeneralDiscr^{(0)}}_{\Tspace\REV{(\Omega_T)}}\,,
	\end{align*}
	holds for all $k\leq N$ with \REV{$C$ coming from~\eqref{eq:errestimatenonlin}}. Then $\bignorm{\poroGeneral - \poroGeneralDiscr^{(k)}}_{\Tspace\REV{(\Omega_T)}} \leq (\poroOpLipschitz + \errorRedFac)^{k} \bignorm{\poroGeneral - \poroGeneralDiscr^{(0)}}_{\Tspace\REV{(\Omega_T)}}$ for $k \leq N+1$, where $\poroGeneral$ solves \eqref{eq:mildweak} and $\poroGeneralDiscr^{(k)}$ is defined as in~\eqref{eq:poroDiscr}.
\end{theorem}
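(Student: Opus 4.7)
The plan is to reduce the statement directly to the abstract \cref{lem:conv} applied to the contraction $\poroOp$ of \cref{lem:poroContraction}, whose unique fixed point in $\Tspace(\Omega_T)$ is the first component $\poroGeneral$ of the solution of~\eqref{eq:mildweak}. It then suffices to verify the error reduction hypothesis~\eqref{eq:errorreduction} for the discretization error $\discrErr{k} = \poroGeneralDiscr^{(k+1)} - \poroOp(\poroGeneralDiscr^{(k)})$; since the second inequality in the hypotheses of the theorem matches~\eqref{eq:errorreduction} verbatim, the whole content reduces to establishing the first inequality
\[
  \bignorm{\discrErr{k}}_{\Tspace(\Omega_T)} \leq \tol_\mathrm{proj} + T\tol_\mathrm{int} + C\tol_\press(Q + Tc_L).
\]

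To obtain this bound, I would first strip the outer projection: writing $\tilde{\poroGeneral}_\discr^{(k+1)}$ for the argument of $\projOp$ in~\eqref{eq:poroDiscr}, the piece $\projOp \tilde{\poroGeneral}_\discr^{(k+1)} - \tilde{\poroGeneral}_\discr^{(k+1)}$ contributes at most $\tol_\mathrm{proj}$ in $\norm{\cdot}_{\Tspace(\Omega_T)}$ by the defining property of $\projOp$ from \cref{sec:temporal}. Comparing $\tilde{\poroGeneral}_\discr^{(k+1)}$ with $\poroOp(\poroGeneralDiscr^{(k)})$ term by term then produces the decomposition
\[
  \tilde{\poroGeneral}_\discr^{(k+1)}(t,\cdot) - \poroOp(\poroGeneralDiscr^{(k)})(t,\cdot)
  = -Q\bigl(\paraOpDiscr[\poroGeneralDiscr^{(k)}] - \paraOp[\poroGeneralDiscr^{(k)}]\bigr)(t,\cdot)
  - \int_0^t E_k(s,\cdot) \D s,
\]
with integrand $E_k = \interOp\bigl(\beta(\poroGeneralDiscr^{(k)})\,\kappa(\paraOpDiscr[\poroGeneralDiscr^{(k)}])\bigr) - \beta(\poroGeneralDiscr^{(k)})\,\kappa(\paraOp[\poroGeneralDiscr^{(k)}])$.

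Next I would estimate each contribution in $\norm{\cdot}_{\Tspace(\Omega_T)}$. The parabolic-solver term is bounded by $CQ\tol_\press$ using \cref{prp:nonlinerr} and the embedding~\eqref{eq:imbedding} together with \cref{def:paraOp}, where $C$ is the constant from~\eqref{eq:errestimatenonlin}. The integrand $E_k$ I would split by adding and subtracting $\beta(\poroGeneralDiscr^{(k)})\kappa(\paraOpDiscr[\poroGeneralDiscr^{(k)}])$: the interpolation part is bounded by $\tol_\mathrm{int}$ uniformly in $s$ by construction (via \cref{thm:interpolation} with sufficiently many Chebyshev nodes), and yields $T\tol_\mathrm{int}$ after integrating in time; the remaining part is controlled via the uniform boundedness of $\beta(\poroGeneralDiscr^{(k)})$ together with the Lipschitz constant $c_L$ of $\kappa$ from \cref{ass:sigma}, contributing $C T c_L \tol_\press$ in the $\Tspace(\Omega_T)$-norm. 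Summing the three bounds gives the claimed estimate on $\bignorm{\discrErr{k}}_{\Tspace(\Omega_T)}$, after which \cref{lem:conv} immediately delivers the geometric decay of $\bignorm{\poroGeneral - \poroGeneralDiscr^{(k)}}_{\Tspace(\Omega_T)}$ stated in the conclusion.

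The main obstacle I anticipate is bookkeeping rather than conceptual: one must verify that the time-trace part $\norm{\gamma_T\,\cdot\,}_{L_2(\Omega)}$ of the $\Tspace(\Omega_T)$-norm from~\eqref{eq:tracenorm} is controlled by the same constants as the $L_2(\Omega_T)$ part for each of the three error contributions --- exactly the type of computation already carried out in~\eqref{eq:estporo} in the proof of \cref{lem:poroContraction}, and, for the parabolic term, enabled by the embedding~\eqref{eq:imbedding}. Once this is in place, no new ideas beyond those assembled in \cref{sec:convergence} are required, which is why the author describes the result as an immediate consequence.
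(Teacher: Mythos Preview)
Your proposal is correct and is precisely the argument the paper has in mind: the authors give no explicit proof, stating only that the result follows ``as an immediate consequence'' of combining \cref{lem:conv} with the contraction $\poroOp$ from \cref{lem:poroContraction} and the error bounds assembled in \cref{sec:nonlin,sec:interpolation}. Your decomposition of $\discrErr{k}$ into the projection, interpolation, and parabolic-solver contributions, together with the bookkeeping for the trace part of the $\Tspace(\Omega_T)$-norm, is exactly the intended route.
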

\REV{
Hence by choosing $k$ large enough, it is possible to achieve an arbitrarily small $\Tspace$-error for the approximation $\poroGeneralDiscr$. This finally allows us to estimate the error for $\pressDiscr$.
\begin{corollary}\label{cor:pressdiscrerr}
	Under assumptions of \cref{prp:nonlinerr,thm:porodiscrerr}, let $\poroGeneralDiscr = \poroGeneralDiscr^{(k)}$ and $\paraOpDiscr[\poroGeneralDiscr]$ be the corresponding numerical solutions such that
	\begin{align*}
		\norm{\paraOp[\poroGeneralDiscr]-\paraOpDiscr[\poroGeneralDiscr]}_{\Tspace(\Omega_T)} \lesssim \tol_{\press} \,,\quad
		\epsilon_{\poroGeneral} = \norm{\poroGeneral-\poroGeneralDiscr}_{\Tspace(\Omega_T)} \lesssim \tol_{\poroGeneral} \,.
	\end{align*}
Then it holds that
	\begin{align*}
		\epsilon_{\press}
		= \norm{\paraOp[\poroGeneral] - \paraOpDiscr[\poroGeneralDiscr]}_{\Tspace(\Omega_T)}
		&\lesssim T^{1/2} \tol_{\poroGeneral} + \tol_{\press} \,.
	\end{align*}
\end{corollary}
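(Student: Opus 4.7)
The plan is to reduce the claim to a simple triangle inequality and then invoke the already-proven continuous stability of the parabolic solution map $\paraOp$ with respect to $\poroGeneral$, combined with the discretization error assumption on $\paraOpDiscr$. Concretely, I would write
\begin{align*}
\norm{\paraOp[\poroGeneral] - \paraOpDiscr[\poroGeneralDiscr]}_{\Tspace(\Omega_T)}
\leq \norm{\paraOp[\poroGeneral] - \paraOp[\poroGeneralDiscr]}_{\Tspace(\Omega_T)} + \norm{\paraOp[\poroGeneralDiscr] - \paraOpDiscr[\poroGeneralDiscr]}_{\Tspace(\Omega_T)}.
\end{align*}
The second term is bounded by $\tol_\press$ (up to a constant) directly by the hypothesis of the corollary, so only the first term requires work.

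For the first term, my plan is to reuse the estimates \eqref{eq:estpress1} and \eqref{eq:estpress2} that were established inside the proof of \cref{lem:poroContraction}. There, for $\press_i = \paraOp[\poroGeneral_i]$, $i=1,2$, the difference equation together with the standard energy estimate yields
\begin{align*}
\norm{\press_2 - \press_1}_{L_2(\Omega_T)} \lesssim T^{1/2} \norm{\poroGeneral_2 - \poroGeneral_1}_{L_2(\Omega_T)},
\end{align*}
while \cite[Thm.~4.2]{Bachmayr2023} (combined with the bounded-domain embedding $L_\infty \hookrightarrow L_2$ on $\Omega$) gives the terminal-trace bound
\begin{align*}
\norm{\press_2(T,\cdot) - \press_1(T,\cdot)}_{L_2(\Omega)} \lesssim T^{1/2} \norm{\poroGeneral_2 - \poroGeneral_1}_{L_2(\Omega_T)}.
\end{align*}
Adding the squares as in the definition \eqref{eq:tracenorm} of $\norm{\cdot}_{\Tspace(\Omega_T)}$, and observing that $\norm{\poroGeneral_2 - \poroGeneral_1}_{L_2(\Omega_T)} \leq \norm{\poroGeneral_2 - \poroGeneral_1}_{\Tspace(\Omega_T)}$, I obtain
\begin{align*}
\norm{\paraOp[\poroGeneral] - \paraOp[\poroGeneralDiscr]}_{\Tspace(\Omega_T)}
\lesssim T^{1/2} \norm{\poroGeneral - \poroGeneralDiscr}_{\Tspace(\Omega_T)}
= T^{1/2} \epsilon_{\poroGeneral}
\lesssim T^{1/2} \tol_{\poroGeneral}.
\end{align*}

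Combining the two contributions gives the asserted bound $\epsilon_\press \lesssim T^{1/2} \tol_{\poroGeneral} + \tol_\press$. No new obstacle arises: all ingredients have been established earlier, and the only minor point to verify is that the implicit constants from \eqref{eq:estpress1}--\eqref{eq:estpress2} are independent of the particular $\poroGeneralDiscr$ used, which holds thanks to the uniform bounds $\epsilon \leq \poroGeneralDiscr \leq R$ under \cref{ass:parabolic,ass:alphabeta} (so that $\alpha,\beta,\zeta$ and $1/\sigma$ are uniformly Lipschitz and bounded on the relevant range). The proof is essentially one line of triangle inequality plus one line of citation to the stability estimate already contained in the proof of \cref{lem:poroContraction}.
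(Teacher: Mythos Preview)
Your proposal is correct and follows essentially the same route as the paper's proof: a triangle inequality splitting $\paraOp[\poroGeneral]-\paraOpDiscr[\poroGeneralDiscr]$ through $\paraOp[\poroGeneralDiscr]$, the hypothesis for the discretization term, and the estimates \eqref{eq:estpress1}--\eqref{eq:estpress2} from the proof of \cref{lem:poroContraction} for the continuous stability term. Your additional remark on the uniformity of the implicit constants is a welcome clarification but adds nothing new.
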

\begin{proof}
	By applying ~\eqref{eq:estpress1} and~\eqref{eq:estpress2} we get
	\begin{align*}
		\norm{\paraOp[\poroGeneral] - \paraOpDiscr[\poroGeneralDiscr]}_{\Tspace(\Omega_T)}
		&\leq \norm{\paraOp[\poroGeneral] - \paraOp[\poroGeneralDiscr]}_{\Tspace(\Omega_T)} + \norm{\paraOp[\poroGeneralDiscr] - \paraOpDiscr[\poroGeneralDiscr]}_{\Tspace(\Omega_T)}\\
		&\lesssim T^{1/2} \norm{\poroGeneral - \poroGeneralDiscr}_{\Tspace(\Omega_T)} + \norm{\paraOp[\poroGeneralDiscr] - \paraOpDiscr[\poroGeneralDiscr]}_{\Tspace(\Omega_T)}\\
		&\lesssim T^{1/2} \tol_{\poroGeneral} + \tol_{\press} \,,
	\end{align*}
	which is the desired result.
\end{proof}
Note that this result is different from the one in \cref{thm:pressdiscrerr}, since there we only considered the error $\norm{\paraOp[\overline{\poroGeneral}]-\paraOpDiscr[\overline{\poroGeneral}]}_{\Tspace(\Omega_T)}$ for a fixed $\overline{\poroGeneral}$.
}

\subsection{Time slices}\label{sec:timeslices}%
As mentioned in \cref{sec:temporal}, we will in general need to split the domain into time slices in order to ensure convergence of the nonlinear iterations.
This aligns with the theory in \cite{Bachmayr2023}, where existence results are local in time. We thus obtain a natural limitation on the largest time steps that can be produced by the adaptive scheme. However, the size of the slices does not depend on the discretizations.

Let us now consider the propagation of solution errors in such a scheme. 
To simplify notation, in the following discussion we let $[0,T]$ stand for a time slice of admissible size.
In view of \eqref{eq:imbedding}, we have a well-defined exact solution $( \varphi,  u) \in (C([0,T];L_2(\Omega)))^2$ of \eqref{eq:mildweak} on $[0,T]$ for initial data $( \varphi_0,  u_0) \in (L_2(\Omega) )^2$ at $t=0$. 

By Lipschitz continuity of $( \varphi,  u)$ with respect to $( \varphi_0,  u_0)$ and the embedding~\eqref{eq:imbedding}, $(\gamma_{T} \tilde\varphi, \gamma_{T}\tilde u) \in (L_2(\Omega))^2$ is Lipschitz continuous with respect to $( \varphi_0,  u_0)$ with a constant depending on the problem data and on $T$, which here is the length of the time slice. In particular, $(\gamma_{T} \varphi, \gamma_{T} u)$ provide initial data for the following time slice.

Let us now consider the numerical approximations $\poroGeneralDiscr$ of $\varphi$ and $ \paraOpDiscr[\poroGeneralDiscr]$ of $\press$, respectively.
Note first that by \cref{thm:pressdiscrerr,thm:porodiscrerr} together with~\eqref{eq:estpress1} and~\eqref{eq:estpress2} the $L_2$-errors of $\gamma_{T} \poroGeneralDiscr$ and $\gamma_{T} \paraOpDiscr[\poroGeneralDiscr]$ in the initial slice are controlled \REV{since by \cref{thm:porodiscrerr,cor:pressdiscrerr} we know that
\begin{align*}
	\norm{\gamma_{T} \poroGeneralDiscr - \gamma_{T} \poroGeneral}_{L_2(\Omega)}
	&\leq \norm{\poroGeneralDiscr-\poroGeneral}_{\Tspace(\Omega_T)}
	\lesssim \tol_{\poroGeneral} \,,\\
	\norm{\gamma_{T} \pressDiscr - \gamma_{T} \press}_{L_2(\Omega)}
	&\leq \norm{\pressDiscr-\press}_{\Tspace(\Omega_T)}
	\lesssim T^{1/2} \tol_{\poroGeneral} + \tol_{\press} \,,
\end{align*}
where the hidden constants are independent of $\press,\pressDiscr,\poroGeneral,\poroGeneralDiscr$.
}

On the following time slices we can estimate the errors by the sum of the nonlinear error, the amplified initial error (which equals the terminal error of the previous slice) and the discretization error due to a modified version of~\eqref{eq:errinductive} including initial errors.
%The amplification factors for the initial errors depend on the Lipschitz constants $\poroOpLipschitz$, $\pressOpLipschitz$, the embedding~\eqref{eq:imbedding}, as well as on~\eqref{eq:estpress1} and~\eqref{eq:estpress2}.
\REV{
Assuming that the nonlinear errors from the previous slice are given by
\begin{align*}
	\epsilon_{\poroGeneral}^{\mathrm{old}}
	= \norm{\poroGeneralDiscr^{\mathrm{old}}-\poroGeneral^{\mathrm{old}}}_{\Tspace(\Omega_T)} \,,\quad
	\epsilon_{\press}^{\mathrm{old}}
	= \norm{\pressDiscr^{\mathrm{old}}-\press^{\mathrm{old}}}_{\Tspace(\Omega_T)} \,,
\end{align*}
one can proceed as in \cref{thm:pressdiscrerr,thm:porodiscrerr,cor:pressdiscrerr}. The difference, however, is that we need to incorporate perturbations in the initial errors as well. We obtain
\begin{align*}
	\norm{\paraOp[\poroGeneralDiscr]-\paraOpDiscr[\poroGeneralDiscr]}_{\Tspace(\Omega_T)}
	\leq \norm{\paraOp[\poroGeneralDiscr] - \breve{\paraOp}[\poroGeneralDiscr]}_{\Tspace(\Omega_T)} + \norm{\breve{\paraOp}[\poroGeneralDiscr] - \paraOpDiscr[\poroGeneralDiscr]}_{\Tspace(\Omega_T)}
	\lesssim \epsilon_{\press}^{\mathrm{old}} + \tol_{\press} \,,
\end{align*}
where we proceed as in \cref{thm:pressdiscrerr} for the second summand and
$\breve{\paraOp}$ is defined to be the exact solution operator solving~\eqref{eq:weak} but with the perturbed initial function $\press_{0,\discr}$. The first estimate follows directly from parabolic regularity theory (see, for example \cite[Chap.~7]{Evans2010}). Continuing as with \cref{thm:porodiscrerr} (using \cref{lem:poroContraction} as well) we define $\breve{\poroGeneral}$ to be the exact solution of~\eqref{eq:mild} but with perturbed initial data given by $\poroGeneral_{0,\discr} = \poroGeneralDiscr^{\mathrm{old}}(T,\cdot)$ and $\press_{0,\discr} = \pressDiscr^{\mathrm{old}}(T,\cdot)$. Then by~\eqref{eq:estpress1} and~\eqref{eq:estpress2},
\begin{align*}
	\norm{\poroGeneral-\breve{\poroGeneral}}_{\Tspace(\Omega_T)}
	&= \bigg\| \poroGeneral_{0} - \poroGeneral_{0,\discr} + Q (\press_{0}-\press_{0,\discr}) - Q (\paraOp[\poroGeneral]-\breve{\paraOp}[\breve{\poroGeneral}])\\
	&\phantom{=\bigg\|} - \int_{0}^{t} \beta(\poroGeneral) \kappa(\paraOp[\poroGeneral]) - \beta(\breve{\poroGeneral}) \kappa(\breve{\paraOp}[\breve{\poroGeneral}]) \D s \bigg\|_{\Tspace(\Omega_T)}\\
	&\lesssim \epsilon_{\poroGeneral}^{\mathrm{old}} + \epsilon_{\press}^{\mathrm{old}} + T^{1/2} \norm{\poroGeneral-\breve{\poroGeneral}}_{\Tspace(\Omega_T)} \,,
\end{align*}
which yields
\begin{align*}
	\norm{\poroGeneral-\breve{\poroGeneral}}_{\Tspace(\Omega_T)}
	\leq \frac{C}{1-T^{1/2}C} (\epsilon_{\poroGeneral}^{\mathrm{old}} + \epsilon_{\press}^{\mathrm{old}}) 
\end{align*}
for sufficiently small $T$. Combing this with \cref{thm:porodiscrerr} leads to
\begin{equation}\label{eq:propagationphi}
	\norm{\poroGeneral-\poroGeneralDiscr}_{\Tspace(\Omega_T)}
	\leq \norm{\poroGeneral-\breve{\poroGeneral}}_{\Tspace(\Omega_T)} + \norm{\breve{\poroGeneral}-\poroGeneralDiscr}_{\Tspace(\Omega_T)}
	\lesssim \epsilon_{\poroGeneral}^{\mathrm{old}} + \epsilon_{\press}^{\mathrm{old}} + \tol_{\poroGeneral} \,.
\end{equation}
Applying the same argument as in \cref{cor:pressdiscrerr} finally yields
\begin{equation}\label{eq:propagationu}
	\norm{\press-\pressDiscr}_{\Tspace(\Omega_T)}
	\leq \norm{\paraOp[\poroGeneral] - \paraOp[\poroGeneralDiscr]}_{\Tspace(\Omega_T)} + \norm{\paraOp[\poroGeneralDiscr]-\paraOpDiscr[\poroGeneralDiscr]}_{\Tspace(\Omega_T)}
	\lesssim T^{1/2} \tol_{\poroGeneral} + \epsilon_{\press}^{\mathrm{old}} + \tol_{\press} \,,
\end{equation}
which shows that we get an amplification due to the perturbed initial data.
}

\subsection{Adaptive choice of tolerances}\label{sec:adapt_tol}
Now we want to use the above framework to choose the tolerances of \cref{alg:coupled} adaptively. We assume that the interpolation order is sufficiently high, so that $\tol_\mathrm{int}$ can be neglected. Furthermore, the underlying Lipschitz constants need to satisfy $\poroOpLipschitz, \pressOpLipschitz < 1$, which amounts to a restriction on the sizes of time slices.

We start by solving the nonlinear equation for $\press$ with an initial guess of the respective Lipschitz constant and optionally update it during the iteration.
Then we use the same idea to solve for $\poroGeneral$ which yields an improved version of \cref{alg:coupled}.
\begin{algorithm}[h!]
	\caption{\textsc{Fully Adaptive Method}}\label{alg:adapt_tol}
	\begin{algorithmic}
		\REQUIRE $\tol_\poroGeneral$, $\tol_\mathrm{int}$, $\poroGeneral_0$, $\press_0$, $\theta \in (0,1)$, $\errorRedFac_\poroGeneral \in (0,1)$, $\errorRedFac_\press \in (0,1)$,\\
		initial guesses $\poroOpLipschitz \in (0,1)$, $\pressOpLipschitz \in (0,1)$
		\ENSURE $\poroGeneralDiscr$, $\pressDiscr$
		\STATE initialize $\poroGeneralDiscr^{(0)}\REV{(t,x) = \poroGeneral_0(x), \res_\poroGeneral = \tol_\poroGeneral+1, k = 0}$
		\WHILE{$\res_\poroGeneral > \tol_\poroGeneral$}
		\STATE \REV{set $\tol_\press = \theta \frac{1}{Q + T c_L} \errorRedFac_\poroGeneral (1-\poroOpLipschitz) \res_\poroGeneral$}
		\STATE \REV{set $\tol_\mathrm{proj} = (1-\theta) \errorRedFac_\poroGeneral (1-\poroOpLipschitz) \res_\poroGeneral$}
		\STATE initialize $\pressDiscr^{(0)}\REV{(t,x) = \press_0(x), \res_\press = \tol_\press+1, \ell = 0}$
		\WHILE{$\res_\press > \tol_\press$}
		\STATE set $\tol_\mathrm{lsq} = \errorRedFac_\press (1-\pressOpLipschitz) \res_\press$
		\STATE solve $\REV{(\pressDiscr^{(\ell+1)},\pressFluxDiscr^{(\ell+1)})} = \paraOpDiscr[\poroGeneralDiscr^{(k)},\pressDiscr^{(\ell)}]$ up to $\tol_\mathrm{lsq}$ \REV{by means of~\eqref{eq:lsqlin}}
		\STATE compute $\res_\press = \norm{G[\pressDiscr^{(\ell+1)}]\REV{(\pressDiscr^{(\ell+1)},\pressFluxDiscr^{(\ell+1)})} - R}_\pressRanSp$
		\STATE \textit{Optional:} update Lipschitz constant $\pressOpLipschitz$
		\STATE \REV{$\ell = \ell+1$}
		\ENDWHILE
		\STATE calculate $\poroGeneralDiscr^{(k+1)}$ by \eqref{eq:poroDiscr} up to $\tol_\mathrm{proj}$, $\tol_\mathrm{int}$
		\STATE \REV{compute $\res_\poroGeneral = \frac{\poroOpLipschitz}{1-\poroOpLipschitz} \bignorm{\poroGeneralDiscr^{(k+1)}-\poroGeneralDiscr^{(k)}}_{\Tspace\REV{(\Omega_T)}}$}
		\STATE \textit{Optional:} update Lipschitz constant $\poroOpLipschitz$
		\STATE \REV{$k = k+1$}
		\ENDWHILE
	\end{algorithmic}
\end{algorithm}
We adapt $\tol_\poroGeneral$ and $\tol_\mathrm{proj}$ here and use the nonlinear residual as error indicator for $\press$ as before. But in contrast to \cref{def:paraOp,alg:coupled}, we can \REV{generally} obtain a better \REV{approximation of the error} of $\poroGeneral$ \REV{by the indicator $\res_{\poroGeneral}$} due to the Lipschitz constant $\poroOpLipschitz$. \REV{Although it might} still not be very accurate depending on the value of $\poroOpLipschitz$, it will allow us to observe the order of convergence in \cref{sec:error}.
\begin{remark}\label{rem:LipschitzUpdate}
	Various heuristics are possible for updating the estimates of Lipschitz constants.
%	In our tests, we update the Lipschitz constants as
%	\begin{align*}
%		\pressOpLipschitz = (1-\lambda) \pressOpLipschitz + \lambda \frac{\res_\press^\mathrm{new}}{\res_\press^\mathrm{old}},\quad
%		\poroOpLipschitz = (1-\lambda) \poroOpLipschitz + \lambda \frac{\res_\poroGeneral^\mathrm{new}}{\res_\poroGeneral^\mathrm{old}},
%	\end{align*}
%	for some fixed $\lambda \in (0,1]$. The damping strategy is used to avoid inadmissible constants greater or equal to one.
	In our tests, we update the Lipschitz constants as
	\REV{
	\begin{align*}
		\pressOpLipschitz = \min \bigg\{ \pressOpLipschitz^{\mathrm{init}}, \frac{\res_{\press}^\mathrm{new}}{\res_{\press}^\mathrm{old}} \bigg\},\quad
		\poroOpLipschitz = \min \bigg\{ \poroOpLipschitz^{\mathrm{init}}, \frac{\res_{\poroGeneral}^\mathrm{new}}{\res_{\poroGeneral}^\mathrm{old}} \bigg\},
	\end{align*}
	to avoid inadmissible constants greater or equal to one.}
	This can still occur, but only if the initial guess for $\pressOpLipschitz$ is too small and hence the discretization error is so large that the discrete fixed-point map is no longer a contraction. In practice, we start updating the Lipschitz constants only after several iteration steps to obtain stable estimates.
\end{remark}
Convergence of \cref{alg:adapt_tol}, provided that $\errorRedFac_\poroGeneral$ and $\errorRedFac_\press$ are sufficiently small, follows directly from \cref{thm:pressdiscrerr,thm:porodiscrerr}.

\section{Numerical Experiments} 
In this section, we present numerical results obtained by the space-time adaptive method described in \cref{alg:coupled,alg:adapt_tol}. We first show results for different test cases and then turn to convergence rates of the fully adaptive method as described in \cref{alg:adapt_tol}. \REV{Note that all tolerances we chose to run \cref{alg:coupled,alg:adapt_tol} can be found in \cref{tab:alg_params}. Throughout this section we use polynomial degrees $(\REV{q,p})=(2,3)$ for the finite element space as in \eqref{eq:fespace}.}

\subsection{Applications}\label{sec:applications}
\Cref{alg:coupled,alg:adapt_tol} do not require $\poroGeneral$ or $\poro$ to be continuous and are thus in particular applicable to problems with discontinuities in $\poroGeneral_0$ or $\poro_0$.
We first consider the test problem
\begin{subequations}\label{eq:fullNonLin}
	\begin{align}
		\partial_t \poro &=-(1-\poro) \bl \frac{\poro}{\sigma(\press)}\press +Q\partial_t \press \br ,\\
		\partial_t \press &= \nabla \cdot \poro^3 (\nabla \press + (1-\poro))-\frac{\poro}{\sigma(\press)}\press ,
	\end{align}
\end{subequations}
with $\Omega = (0,1)$ and $T = 1$ where
$
	\sigma(\press)
	= 1 - \REV{\tfrac{12}{25}} \bl 1 + \tanh \bl - 25 \press \br \br
$.
Here we also make use of the reformulation~\eqref{eq:logtransformed} in order to deal with the factor $(1-\poro)$.
\begin{figure}[h!]
	\centering
	\vspace*{-0.3cm}
	\capstart
	\FPmul\result{0.95}{\convfac}
	\includegraphics[width=\result\textwidth]{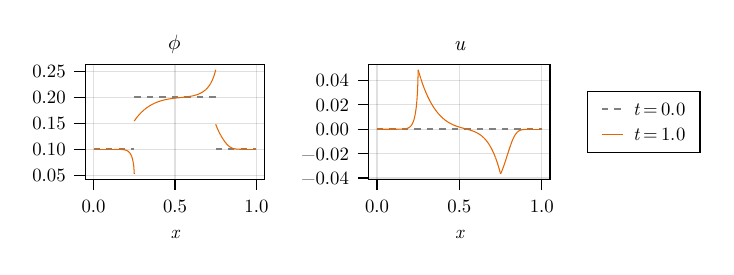}
	\vspace*{-0.6cm}
	\caption{Numerical approximation of $\poro$ and $\press$ from~\eqref{eq:fullNonLin}.}\label{fig:fullNonLin}
\end{figure}%
In \cref{fig:fullNonLin} one can see the numerical solution of \cref{alg:coupled} for the initial and terminal time and the corresponding space-time grids are shown in \cref{fig:fullNonLinGrid}.
\begin{figure}[h!]
	\centering
	\vspace*{-0.3cm}
	\capstart
	\FPmul\result{0.9}{\convfac}
	\includegraphics[width=\result\textwidth]{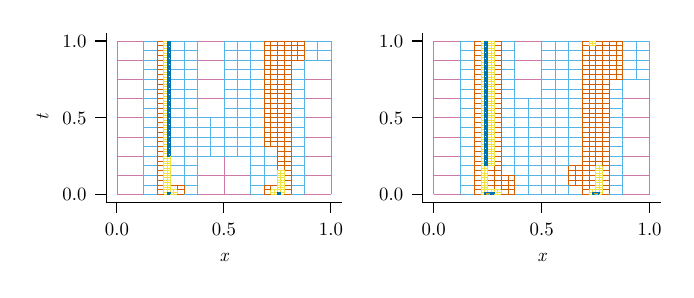}
	\vspace*{-0.6cm}
	\caption{Space-time grids for $\poro$ (left) and $\press$ (right) from~\eqref{eq:fullNonLin}\REV{, the colors indicate the refinement level of each element.}}\label{fig:fullNonLinGrid}
\end{figure}%
This highlights the localized behavior of solutions and hence shows the advantage of space-time adaptivity in this context. It also shows the formation of steep gradients in $\poro$ near discontinuities in the initial data.

Next we apply \cref{alg:coupled} to a more realistic problem from geophysics. \REV{Therefore we consider \cite[eq.~7]{Vasilyev1998} with decompaction weakening as in~\cite{Raess2018,Raess2019} and $Q=3\cdot10^9\,\mathrm{Pa}$ as in~\cite{Connolly1998}. To nondimensionalize it we use the independent scales
\begin{align}\label{eq:indep_scales}
	\scale{x}
	= 10^4\,\mathrm{m} \,,\quad
	\scale{\densityDiff} \scale{g}
	= 5\cdot10^3\,\mathrm{kg}\cdot\mathrm{m}^{-2}\cdot\mathrm{s}^{-2} \,,\quad
	\scale{\bulkVisc}
	= 10^{19}\,\mathrm{Pa}\cdot\mathrm{s} \,,
\end{align}
which yield
\begin{align}
	\begin{aligned}\label{eq:scales}
		\scale{\press}
		&= \scale{\densityDiff} \scale{\gravity} \scale{x}
		= 5\cdot10^7\,\mathrm{Pa} \,,
		&&\scale{t}
		= \frac{\scale{\bulkVisc}}{\scale{\press}}
		= 2\cdot10^{11}\,\mathrm{s} \,,\\
		\frac{\scale{\perm}}{\scale{\fluidVisc}}
		&= \frac{\scale{x}^2}{\scale{\bulkVisc}}
		= 10^{-11}\,\mathrm{m}^2\cdot\mathrm{Pa}^{-1}\cdot\mathrm{s}^{-1} \,,
		&&\scale{Q}
		= \frac{1}{\scale{\press}} = 2\cdot10^{-8} \,\mathrm{Pa}^{-1} \,.
	\end{aligned}
\end{align}
This results in the nondimensional number $Q=\frac{1}{60}$, again denoted as $Q$ for convenience, as well as $a(\phi) = (10\,\phi)^3$ due to the background porosity (see, for example~\cite{Connolly1998,Raess2019}).}

For the first test, we consider a discontinuous $\poro_0$ and $\sigma \equiv 1$ (corresponding to no decompaction weakening).
The equations in nondimensional form read
\begin{subequations}\label{eq:geoTest}
	\begin{align}
		\partial_t \poro &=-(1-\poro) \bl \poro \, \press +\frac{1}{60} \partial_t \press \br ,\\
		\partial_t \press &= 60 \, \REV{\bl \nabla \cdot (10 \, \poro)^3 (\nabla \press + (1-\poro))-\poro \, \press \br} ,
	\end{align}
\end{subequations}
for $\Omega = (0,3)$ and $T = 15.77\REV{88}$, which represent a length of $30\,\mathrm{km}$ and time of $0.1 \,\mathrm{Myr}$ after rescaling \REV{with $\scale{x}$ and $\scale{t}$ from~\eqref{eq:indep_scales} and~\eqref{eq:scales}}.

As discussed in \cref{sec:temporal,sec:timeslices}, we can split the space-time cylinder into time slices whose size solely depends on the continuous problem (via the Lipschitz constant of $\Theta$), but not on the discretization.
\begin{figure}[h!]
	\centering
	\vspace*{-0.3cm}
	\capstart
	\includegraphics[width=\convfac\textwidth]{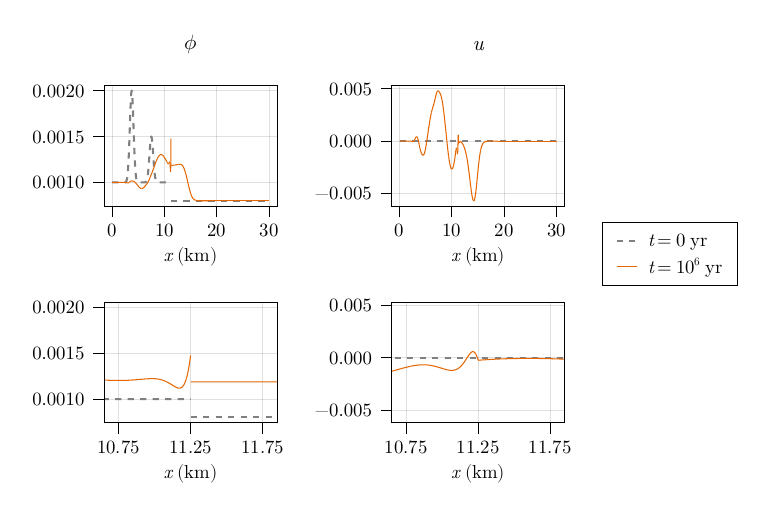}
	\vspace*{-0.6cm}
	\caption{Numerical approximation of $\poro$ and $\press$ from~\eqref{eq:geoTest} with zoom-in at the discontinuity (bottom).}\label{fig:geoTest2b}
\end{figure}%
The corresponding grids for the different slices are concatenated to obtain two separate global space-time grids for $\poro$ and $\press$, which are shown in \cref{fig:geoTest2b_grid}; note the localized refinements both in space and in time.
Hence we still obtain a space-time adaptive method with the advantage of localized time-steps.

\REV{We solve~\eqref{eq:geoTest} for a discontinuous $\poro_0$ modelling two porosity anomalies and a sharp interface. Its precise definition can be found in \cref{tab:init_setup}. This}
yields \REV{the} results depicted in \cref{fig:geoTest2b} with the associated grids shown in \cref{fig:geoTest2b_grid}.
Here we plot the numerical solution at the start and after 10 time slices.
One can clearly see the similarities with the test problems considered in \cref{fig:comparison,fig:fullNonLin}, for example that discontinuities lead to the formation of steep gradients. This is particularly visible in the bottom of \cref{fig:geoTest2b}, where we show the solution near the discontinuity.
This shows the advantage of our adaptive method in resolving solution features on different scales, which is reflected in the corresponding grids shown in \cref{fig:geoTest2b_grid}.
\begin{figure}[h!]
	\centering
	\vspace*{-0.3cm}
	\capstart
	\FPmul\result{0.9}{\convfac}
	\includegraphics[width=\result\textwidth]{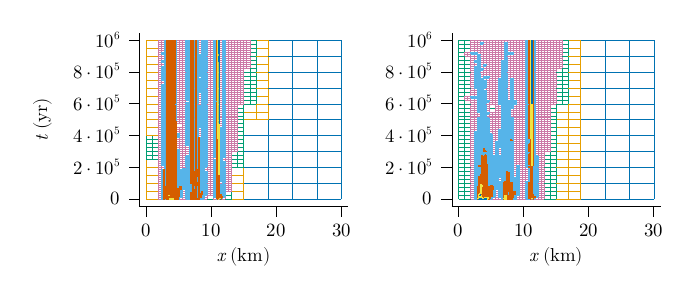}
	\vspace*{-0.6cm}
	\caption{Space-time grids for $\poro$ (left) and $\press$ (right) corresponding to \cref{fig:geoTest2b}.}\label{fig:geoTest2b_grid}
\end{figure}%
The gradients near discontinuities that become increasingly pronounced with time lead to further refinement of the grid near the location of the discontinuity.

Increasing the complexity we stay in a geophysical regime but consider the full nonlinear problem with decompaction weakening and a discontinuity, which reads
\begin{subequations}\label{eq:channel}
	\begin{align}
		\partial_t \poro &=-(1-\poro) \bigg( \frac{\poro^2}{\sigma(\press)}\press + \frac{1}{60}\partial_t \press \bigg) ,\\
		\partial_t \press &= 60 \,\REV{\bl \nabla \cdot (10\poro)^3 \bigg(\nabla \press + (1-\poro) \begin{pmatrix}0\\1\end{pmatrix}\bigg)-\frac{\poro^2}{\sigma(\press)}\press \br } ,
	\end{align}
\end{subequations}
similar to~\eqref{eq:geoTest} with
$
	\sigma(\press)
	= 1 - \tfrac{999}{2000} \bl 1 + \tanh \bl - 500 \press \br \br
$,
for $d=2$, $\Omega = (0,1)\times(0,2)$ and $T = 15.7788$, which represent a length of $10\,\mathrm{km}\times20\,\mathrm{km}$ and time of $0.1 \,\mathrm{Myr}$ after rescaling \REV{with $\scale{x}$ and $\scale{t}$ from~\eqref{eq:indep_scales} and~\eqref{eq:scales}}. In this case, the solution exhibits channel formation as shown in \cref{fig:channel}.
\begin{figure}[h!]
	\centering
	\vspace*{-0.0cm}
	\capstart
	\includegraphics[width=\textwidth]{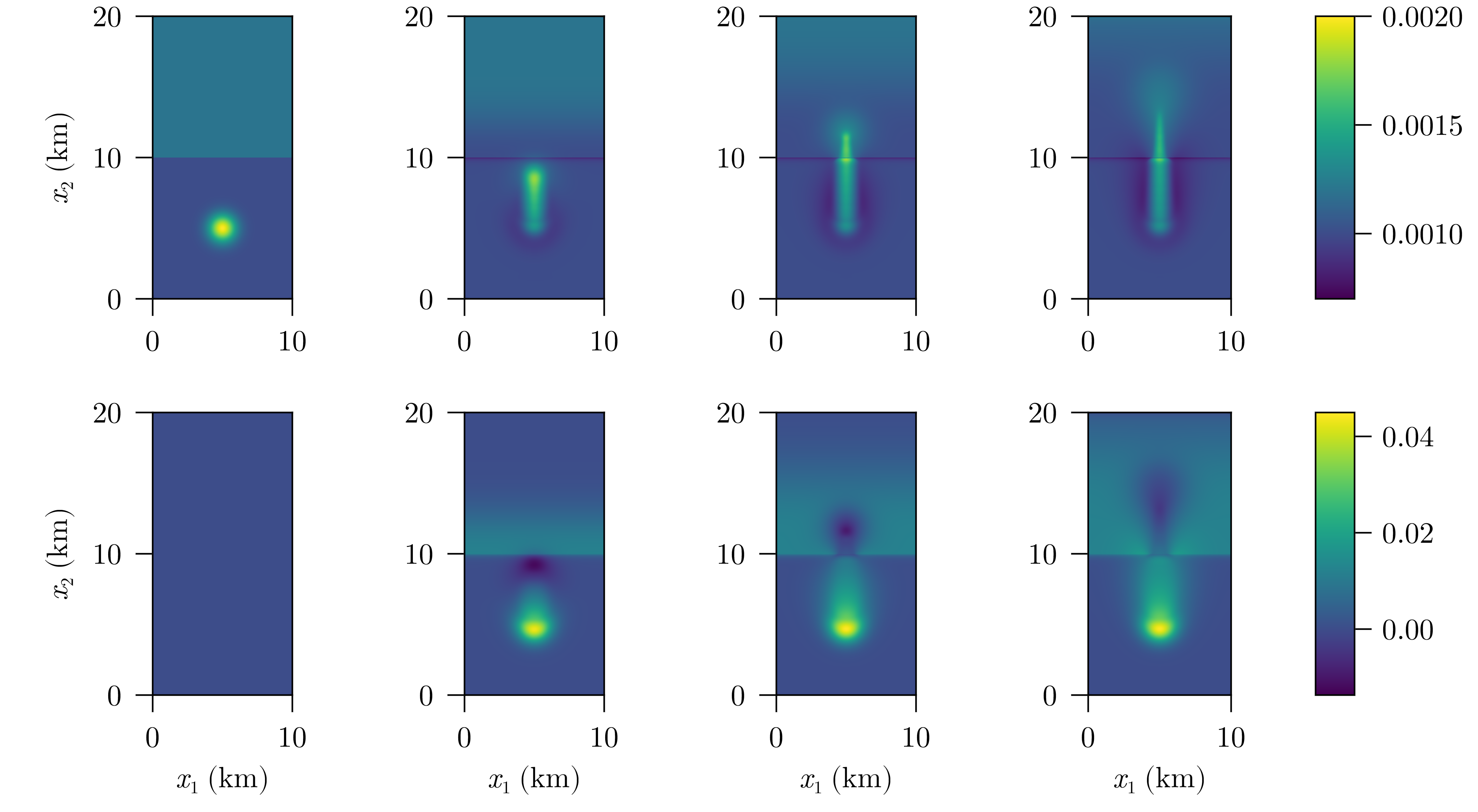}
	\vspace*{-0.5cm}
	\caption{Numerical approximation of $\poro$ (top) and $\press$ (bottom) for $t=0,0.5,1,1.5 \,\mathrm{Myr}$ (from left to right).}\label{fig:channel}
\end{figure}%
\Cref{fig:channel_grid} shows the corresponding three-dimensional space-time grids. Here we use 15 time slices and the linearization described in \cref{rem:taylor}.
\begin{figure}[h!]
	\centering
	\capstart
	\FPmul\result{1.}{\convfac}
	\includegraphics[width=\result\textwidth]{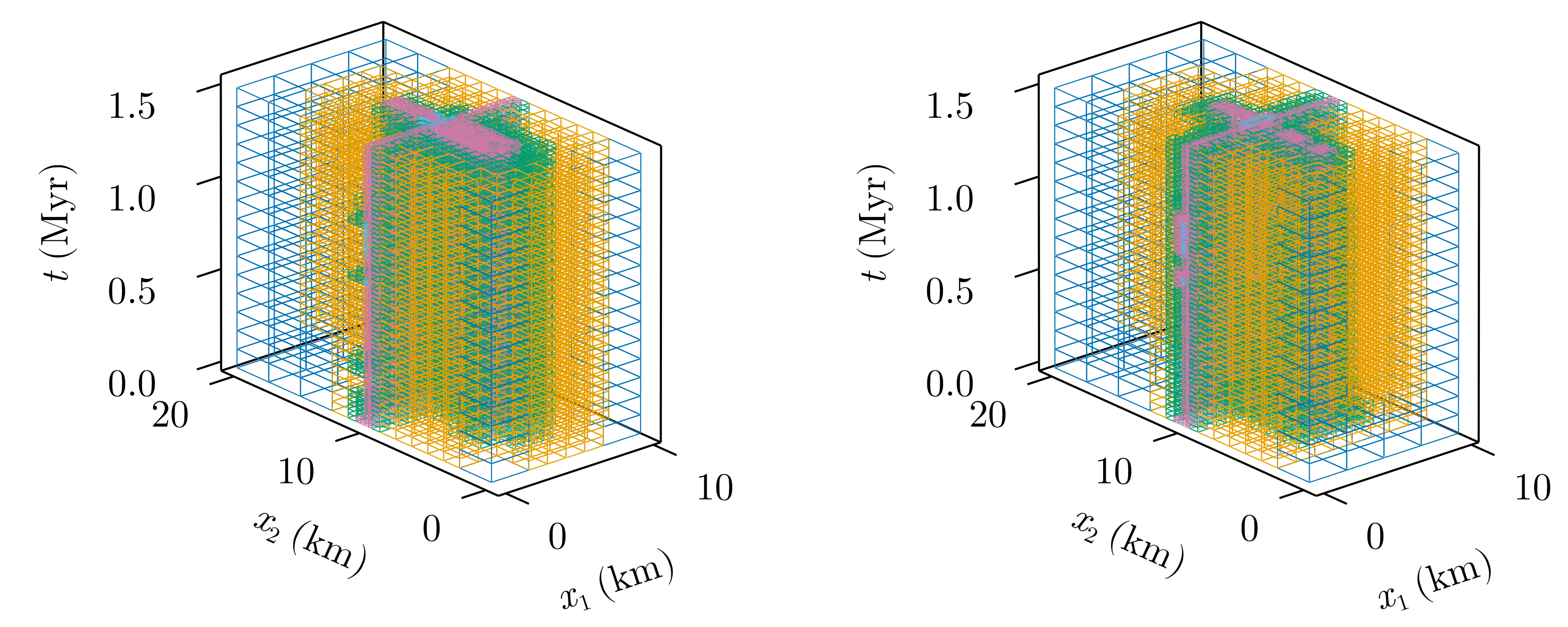}
	\vspace*{-0.0cm}
	\caption{Space-time grids for $\poro$ (left) and $\press$ (right) corresponding to \cref{fig:channel}.}\label{fig:channel_grid}
\end{figure}%
\REV{Note that we consider mixed Dirichlet-Neumann boundary conditions for this numerical test. More precisely, the respective boundary parts $\Gamma_N, \Gamma_D \subset \partial \Omega$, $\Gamma_N \cup \Gamma_D = \partial \Omega$ in these tests are equal to
\begin{align*}
	\Gamma_D = \{(x_1,x_2) \in \partial\Omega \colon x_2 = 0 \text{ or } x_2 = 2\},\quad
	\Gamma_N = \{(x_1,x_2) \in \partial\Omega \colon x_1 = 0 \text{ or } x_1 = 1\}.
\end{align*}
which implies that} we consider homogeneous Neumann boundary conditions on the left and right part of the boundary, as well as homogeneous Dirichlet conditions on the upper and lower part of the boundary.
%Furthermore we consider homogeneous Neumann boundary conditions on the left and right part of the boundary as well as homogeneous Dirichlet conditions on the upper and lower part of the boundary.
\begin{remark}
	Homogeneous Neumann boundary conditions in the least squares formulation \eqref{eq:leastsquares} of the parabolic problems \eqref{eq:weak} with source terms that are only in $L_2(0,T;H^{-1}(\Omega))$ rather than $L_2(\Omega_T)$ generally pose some difficulties, since they correspond to  \emph{inhomogeneous} Dirichlet boundary conditions \REV{(see, for example \cite[Prop.~2.5]{Gantner2021})} on the normal flux $\pressFlux \cdot n$ on $\Gamma_\mathrm{N}\times[0,T]$, where $\Gamma_\mathrm{N} \subset \partial\Omega$ denotes the Neumann boundary and $n$ is the outward normal vector on $\Gamma_\mathrm{N}$. In \cite[Sec.~2.2]{Gantner2021} some more involved techniques are developed for this general case, for example by incorporating an appropriate boundary term into the error functional.
	
	In the particular case~\eqref{eq:channel}, however, we can exploit that the outward normals on $\Gamma_\mathrm{N}$, composed of the left and right sides of $\Omega$, are given by $n = (\pm1,0)^\top$ and thus $\zeta \cdot n = 0$. This implies that we can enforce the Neumann boundary condition on $\press$ by enforcing a \emph{homogeneous} Dirichlet condition on $\pressFlux \cdot n = -\Qalpha \nablax \press \cdot n$.
	
	\REV{To apply these boundary conditions as we did for homogeneous Dirichlet boundary in \cref{sec:parabolic} we replace all spaces $H_0^1(\Omega)$ there by $H_D^1(\Omega) = \{u \in H^1(\Omega) \colon u|_{\Gamma_D} = 0\}$ where $\Gamma_D$ denotes the Dirichlet part of the boundary. Note that \cref{thm:isomorphism} still holds for the adapted spaces due to \cite[Thm.~2.3]{Gantner2021}.}
\end{remark}

\subsection{Convergence rates}\label{sec:error}
Using the fully adaptive methods described in \cref{alg:adapt_tol}, we now turn to the convergence rates achieved for some of the test problems considered so far.

\REV{But before we start with an abstract setting of a domain $\mathcal{D}$, a triangulation $\mathcal{T}$ and a conforming finite element space
\begin{align*}
	\mathcal{V}_{\discr}^{M}(\mathcal{T}) = \left\{ f_{\delta} \in \mathcal{V} \,\colon f_{\delta}|_{\textbf{I}} \in \polySpMult_{M}(\textbf{I}) \text{ for each } \textbf{I} \in \mathcal{T} \right\} \,,
\end{align*}
where $\mathcal{V} \in \{ L_{2}(\mathcal{D}), H^{1}(\mathcal{D}) \}$.
Furthermore we have the inverse inequality (see, for example \cite[Thm.~3.2.6]{Ciarlet2002}, \cite[Cor.~3.4.2]{Cohen2003} or \cite[Lem.~12.1]{Ern2021})
\begin{align}\label{eq:inverse}
	\norm{f_{\delta}}_{H^{M+1}(\mathcal{D})}
	\lesssim h_{\textbf{I}}^{l-M-1} \norm{f_{\delta}}_{H^{l}(\mathcal{D})} \,,
\end{align}
if $f_{\delta} \in \mathcal{V}_{\delta}^{M}(\mathcal{T}) \cap H^{M+1}(\mathcal{D})$ and $M+1 \geq l$ where $h_\textbf{I}$ denotes the diameter of $\textbf{I}$. Following the lines of \cite[Rem.~6.10]{Braess2013} we conclude that the convergence rate of $f_{\delta}\in\mathcal{V}_{\delta}^{M}(\mathcal{T})$ is bounded from above by $M+1-l$.}

\REV{Note that due to the embeddings
\begin{align*}
	H^1(0,T; H^1_0(\Omega)) \times L_2(0,T;H_{\Div_x}(\Omega)) \subset \pressSolSp \subset L_2(0,T; H^1_0(\Omega)) \times L_2(\Omega_T;\R^d)
\end{align*}
we can generally only expect the optimal rate in $H^1(0,T; H^1_0(\Omega)) \times L_2(0,T;H_{\Div_x}(\Omega))$, which is bounded from above by the optimal rate in $H^1(0,T; H^1(\Omega)) = H^1(\Omega_T)$.
}

\REV{Hence by~\eqref{eq:inverse} this convergence rate for the polynomial degrees $(\REV{q,p})=(2,3)$ in $\pressSolSpDiscr(\mathcal{T}_{\press})$ is bounded from above by $3$ (choosing $l=1$ in~\eqref{eq:inverse}), which translates to a rate of $\frac{3}{d+1} = \frac{3}{2}$ with respect to the number of degrees of freedom for the one-dimensional test cases. Moreover, by choosing $l=0$ in~\eqref{eq:inverse}, we see that the convergence rate for $\poroGeneralDiscr$ with polynomial degree $3$ in space and time is bounded from above by $4$, which translates to a rate $\frac{4}{d+1} = 2$ with respect to the total number of degrees of freedom.}

\REV{Next we assume that we have given the iterates $(\poroGeneralDiscr^{(k)},\pressDiscr^{(\ell_k)})$, $k = 0,\ldots,n$, $\ell_k = 0,\ldots,n_k$ from \cref{alg:adapt_tol}. We want to}
plot the \REV{relative} nonlinear error indicators
\REV{%
	\begin{align*}
		\mathrm{rel}_{\poroGeneral}^{(k)}
		= \frac{\norm{\poroGeneralDiscr^{(k)}-\poroGeneralDiscr^{(k-1)}}_{\Tspace(\Omega_T)}}{\norm{\poroGeneralDiscr^{(k)}}_{\Tspace(\Omega_T)}} \,,\qquad
		\mathrm{rel}_{\press}^{(k)}
		= \frac{\norm{\pressLinOp[\pressDiscr^{(n_k)}](\pressDiscr^{(n_k)},\pressFluxDiscr^{(n_k)})-\pressLinRhs}_\pressRanSp}{\norm{(\pressDiscr^{(n_k)},\pressFluxDiscr^{(n_k)})}_{\pressSolSp}} \,,
	\end{align*}
	for $k = 1,\ldots,n$ where we calculate the numerator of $\mathrm{rel}_{\press}^{(k)}$ as in~\eqref{eq:errestimatenonlin} using $\overline{\poroGeneral} = \poroGeneralDiscr^{(n)}$ since it is closest to the nonlinear solution~$\poroGeneral$ of~\eqref{eq:mild}.
}

\begin{figure}[h!]
	\centering
	\vspace*{-0.3cm}
	\capstart
	\includegraphics[width=\textwidth]{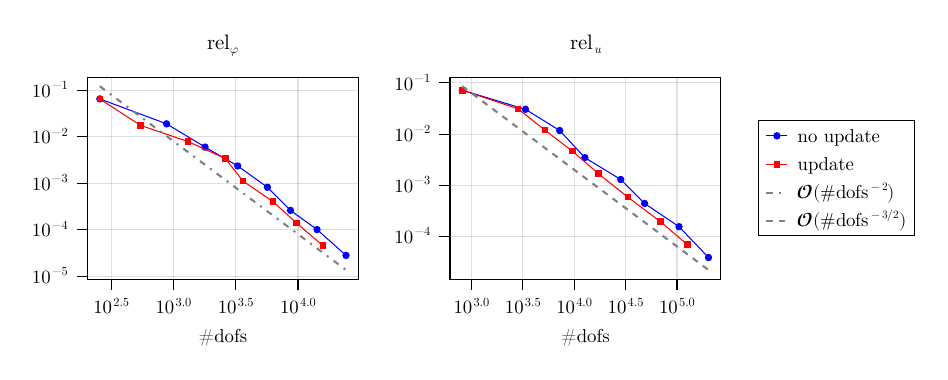}
	\vspace*{-1cm}
	\caption{\REV{Relative error indicators $\mathrm{rel}_{\poroGeneral}$, $\mathrm{rel}_{\press}$ for $\poroOpLipschitz = \frac35$ and $\pressOpLipschitz = \frac35$ corresponding to the solution of~\eqref{eq:fullNonLin} shown in \cref{fig:fullNonLin} with and without Lipschitz constant update.}}\label{fig:fullNonLinErr}
\end{figure}%
\REV{In \cref{fig:fullNonLinErr} one can see $\mathrm{rel}_{\poroGeneral}^{(k)}$ and for $\mathrm{rel}_{\press}^{(k)}$ for the nonlinear test case shown in \cref{fig:fullNonLin}.}
%
%\begin{figure}[h!]
%	\centering
%	\vspace*{-0.3cm}
%	\capstart
%	\includegraphics[width=\convfac\textwidth]{}
%	\vspace*{-0.6cm}
%	\caption{relative $L_2$-errors of $\poroGeneral$ for different initial choices of $\pressOpLipschitz$ and $\poroOpLipschitz$ corresponding to the solution of~\eqref{eq:fullNonLin} shown in \cref{fig:fullNonLin}}\label{fig:fullNonLinErr_phi}
%\end{figure}%
%
Here we observe that even in the presence of discontinuities, we obtain optimal convergence rates for the error of $\REV{\poroGeneralDiscr}$ in $\Tspace(\Omega_T)$ and the error of $\REV{\pressDiscr}$ in $U$ \REV{as it was mentioned above}. Furthermore, we observe that the error reduction improves \REV{if one} adaptively chang\REV{es the Lipschitz constant.}
%
%\begin{figure}[h!]
%	\centering
%	\vspace*{-0.3cm}
%	\capstart
%	\includegraphics[width=\convfac\textwidth]{}
%	\vspace*{-0.6cm}
%	\caption{relative $\pressSolSp$-errors of $\press$ for different initial choices of $\pressOpLipschitz$ and $\poroOpLipschitz$ corresponding to the solution of~\eqref{eq:fullNonLin} shown in \cref{fig:fullNonLin}}\label{fig:fullNonLinErr_u}.
%\end{figure}%

\begin{figure}[h!]
	\centering
	\vspace*{-0.3cm}
	\capstart
	\includegraphics[width=\textwidth]{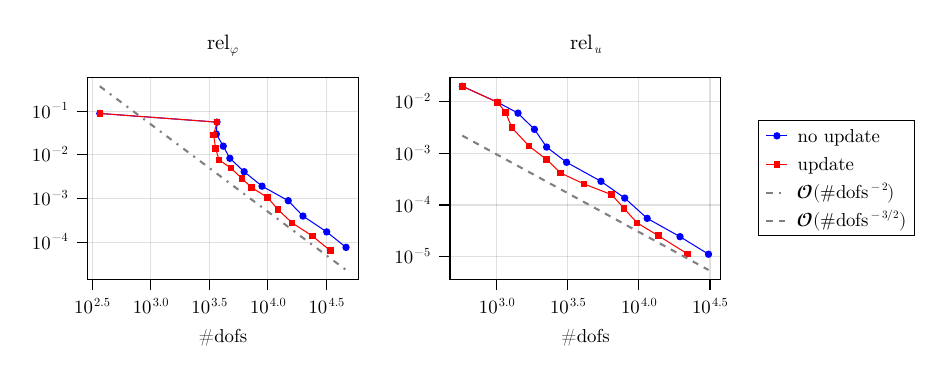}
	\vspace*{-1cm}
	\caption{\REV{Relative error indicators $\mathrm{rel}_{\poroGeneral}$, $\mathrm{rel}_{\press}$ for $\poroOpLipschitz = \frac45$ and $\pressOpLipschitz = \frac45$ corresponding to the solution of~\eqref{eq:geoTest} shown in \cref{fig:geoTest2b} with and without Lipschitz constant update.}}\label{fig:geoTest2b_rel_err}
\end{figure}%
We can do the same for the more applied model~\eqref{eq:geoTest}, \REV{with approximate solution shown in \cref{fig:geoTest2b}}.
\REV{For simplicity, we consider the error on a single time slice without propagated errors from previous slices, for which we have the estimates \eqref{eq:propagationphi} and \eqref{eq:propagationu}.}
The resulting error plots for a discontinuous $\poro_0$ can be found in \cref{fig:geoTest2b_rel_err}.
%\begin{figure}[h!]
%	\centering
%	\vspace*{-0.3cm}
%	\capstart
%	\includegraphics[width=\convfac\textwidth]{}
%	\vspace*{-0.6cm}
%	\caption{relative $L_2$-errors of $\poroGeneral$ for different initial choices of $\poroOpLipschitz$ corresponding to the solution of~\eqref{eq:geoTest} shown in \cref{fig:geoTest2b}}\label{fig:geoTest2b_rel_err_phi}
%\end{figure}%
%
As in the previous case we observe \REV{optimal} rates.
%
%\begin{figure}[h!]
%	\centering
%	\vspace*{-0.3cm}
%	\capstart
%	\includegraphics[width=\convfac\textwidth]{}
%	\vspace*{-0.6cm}
%	\caption{relative $\pressSolSp$-errors of $\press$ for different initial choices of $\poroOpLipschitz$ corresponding to the solution of~\eqref{eq:geoTest} shown in \cref{fig:geoTest2b}}\label{fig:geoTest2b_rel_err_u}
%\end{figure}%

\section{Viscous Limit}\label{sec:viscous}
A common simplification of~\eqref{eq:modelgeneral} is the viscous limit corresponding to $Q\to0$, leading to the equations
\begin{subequations}\label{eq:modelgeneralviscous}
	\begin{align}
		\partial_t \poroGeneral &= -{\beta(\poroGeneral)}{\kappa(\press)} ,\label{eq:modelgeneralviscous_a}\\
		0 &= \nabla \cdot \alpha(\poroGeneral) (\nabla \press + \zeta(\poroGeneral))-{\beta(\poroGeneral)}{\kappa(\press)}.\label{eq:modelgeneralviscous_b}
	\end{align}
\end{subequations}
As before we write~\eqref{eq:modelgeneralviscous_a} in integral form and consider~\eqref{eq:modelgeneralviscous_b} in weak formulation,
\begin{subequations}\label{eq:viscousmildweak}
	\begin{align}
		\poroGeneral(t,\cdot) &= \poroGeneral_0 - \int_0^t \beta(\poroGeneral)\kappa(\press) \D s, & & \text{for $t \in [0,T]$}, \label{eq:viscousmild} \\
		0&=\nabla \cdot \alpha(\poroGeneral) (\nabla \press + \zeta(\poroGeneral))-\beta(\poroGeneral)\kappa(\press) & & \text{in \REV{$H^{-1}(\Omega)$}}. \label{eq:viscousweak}
	\end{align}
\end{subequations}
Furthermore, we assume that \cref{ass:sigma,ass:alphabeta} are satisfied, leading to similar linearizations as the ones introduced in \cref{sec:fixedpoint}. Well-posedness of this approach is shown in \cite[Sec.~3]{Bachmayr2023}.

A space-time adaptive numerical method similar to the one considered above can be obtained along similar lines in this case. Although~\eqref{eq:viscousweak} is elliptic, it is nonetheless time-dependent due to the coupling with $\poroGeneral$. As before, we linearize~\eqref{eq:viscousweak} by means of
\begin{align}\label{eq:viscousfixedpoint_press}
	0
	=\nabla \cdot \alpha(\poroGeneral) (\nabla \press^{(k)} + \zeta(\poroGeneral))-\beta(\poroGeneral) \frac{\press^{(k)}}{\sigma(\press^{(k-1)})},
\end{align}
given the previous iterate $\press^{(k-1)}$. As described in \cref{rem:taylor} a different linearization is possible here as well. Then we define
\begin{align*}
	\pressSolSp
	= \left\{ (\press,\pressFlux) \in L_2(0,T;H_0^1(\Omega)) \times L_2(\Omega_T)^d \,\colon\, \Div_x \pressFlux \in L_2(\Omega_T) \right\},
\end{align*}
with the induced graph norm
\begin{align*}
	\norm{(\press,\pressFlux)}_\pressSolSp^2
	= \norm{(\press,\pressFlux)}_{L_2(\Omega_T,\R^{d+1})}^2 + \norm{\nablax \press}_{L_2(\Omega_T,\R^d)}^2 + \norm{\Div_x \pressFlux}_{L_2(\Omega_T)}^2.
\end{align*}
Next we set
$
	\pressRanSp
	= L_2(\Omega_T) \times L_2(\Omega_T,\R^d)
$
with its canonical norm and for each fixed $\overline{\press}$ define
\begin{align}\label{eq:viscousoperator}
	\pressLinOp[\overline{\press}](\press,\pressFlux) = \begin{pmatrix} \Div_x \pressFlux + \beta \, \frac{\press}{\sigma(\overline{\press})}\\\pressFlux + \alpha \, \nablax \press\end{pmatrix},\quad
	\pressLinRhs = \begin{pmatrix}0\\- \alpha \, \zeta\end{pmatrix},
\end{align}
This allows us to rewrite~\eqref{eq:viscousfixedpoint_press} as
\begin{align}\label{eq:viscoussystemlin}
	\pressLinOp[\press^{(k-1)}](\press^{(k)},\pressFlux^{(k)}) = \pressLinRhs.
\end{align}
To solve~\eqref{eq:viscoussystemlin} numerically for given $\overline{\press}$, we compute
\begin{align*}
	(\pressDiscr,\pressFluxDiscr)
	= \argmin_{(\pressAltDiscr,\pressFluxAltDiscr) \in \pressSolSpDiscr} \norm{\pressLinOp[\overline{\press}](\pressAltDiscr,\pressFluxAltDiscr) - \pressLinRhs}_\pressRanSp,
\end{align*}
as before. Well-posedness and convergence of the adaptive solver can be shown more easily than above for the general case. In \cref{fig:geoTest2b_viscous}, we show a numerical test similar to the one from~\eqref{eq:geoTest} shown in \cref{fig:geoTest2b}, but now with $Q=0$.
\begin{figure}[h!]
	\centering
	\vspace*{-0.3cm}
	\capstart
	\includegraphics[width=\convfac\textwidth]{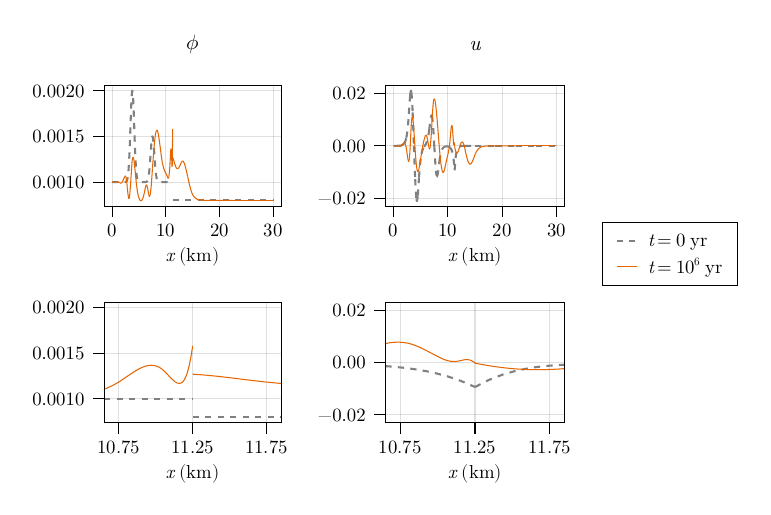}
	\vspace*{-0.6cm}
	\caption{Numerical approximation of $\poro$ and $\press$ for discontinuous $\poro_0$.}\label{fig:geoTest2b_viscous}
\end{figure}%
This time we show the numerical solution at the start and after 15 time slices, and as before we show a detail view near the discontinuity at the bottom of \cref{fig:geoTest2b_viscous}.
The corresponding grids are plotted in \cref{fig:geoTest2b_viscous_grid}.
\begin{figure}[h!]
	\centering
	\vspace*{-0.3cm}
	\capstart
	\FPmul\result{0.9}{\convfac}
	\includegraphics[width=\result\textwidth]{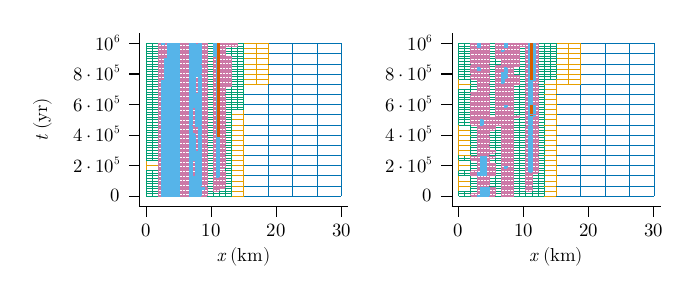}
	\vspace*{-0.6cm}
	\caption{Space-time grids for $\poro$ (left) and $\press$ (right) corresponding to \cref{fig:geoTest2b_viscous}.}\label{fig:geoTest2b_viscous_grid}
\end{figure}%

\subsection*{Acknowledgements}
The authors would like to thank Evangelos Moulas for introducing them to the models discussed in this work and for helpful discussions, Igor Voulis for advice on aspects of the implementation and Henrik Eisenmann for help concerning the results about perturbed fixed-point iterations.

\appendix

\REV{
\section{Model setup}\label{sec:setup}
Now we provide details about the model setup for all figures in this work.
All considered equations are of the general form
\begin{subequations}\label{eq:comparison}
	\begin{align}
		\partial_t \poro &=-(1-\poro) \bl \poro^m \, \press + Q \partial_t \press \br ,\\
		\partial_t \press &= \frac{1}{Q} \bl \nabla \cdot  (c_\phi \poro)^3 (\nabla \press + (1-\poro) \textbf{e}_d)- \frac{\poro^m}{\sigma(u)} \, \press \br ,
	\end{align}
\end{subequations}
where $\textbf{e}_d$ denotes a unit vector and $c_\phi > 0$ is a constant. Moreover, we set
\begin{align*}
	\sigma(v)=1 - c_1 \bl 1 + \tanh \bl -\frac{v}{c_2} \br \br ,\quad v\in \R,
\end{align*}
similar to~\eqref{eq:sigma}.}
\REV{The choice of these constants as well as details on $\Omega$ and $T$ (both in nondimensional form) for all the numerical tests can be found in \cref{tab:model_params}.
\begin{table}[h!]
	\centering
	\REV{
	\begin{tabular}{lccccccc}
		\hline
		Figure & $\Omega$ & $T$ & $m$ & $Q$ & $c_\phi$ & $c_1$ & $c_2$ \\
		\hline
		\cref{fig:comparison} & $(0,3)$ & $157.788$ & $1$ & $\frac{1}{60}$ & $10$ & $0$ & $1$ \\
		\cref{fig:unphysical_physical} & $(0,1)$ & $3$ & $2$ & $1$ & $1$ & $\frac{199}{400}$ & $\frac{1}{200}$ \\
		\cref{fig:fullNonLin} & $(0,1)$ & $1$ & $1$ & $1$ & $1$ & $\frac{12}{25}$ & $\frac{1}{25}$ \\
		\cref{fig:geoTest2b} & $(0,3)$ & $157.788$ & $1$ & $\frac{1}{60}$ & $10$ & $0$ & $1$ \\
		\cref{fig:channel} & $(0,1)\times(0,2)$ & $236.682$ & $2$ & $\frac{1}{60}$ & $10$ & $\frac{999}{2000}$ & $\frac{1}{500}$ \\
		\cref{fig:geoTest2b_viscous} & $(0,3)$ & $157.788$ & $1$ & $0$ & $10$ & $0$ & $1$ \\
		\hline
	\end{tabular}}
	\caption{Model parameters}\label{tab:model_params}
\end{table}}

\REV{
Furthermore we provide all initial functions $\poro_0$ and $\press_0$ in \cref{tab:init_setup}.
\begin{table}[h!]
	\centering
	\REV{
	\begin{tabular}{llc}
		\hline
		\cref{fig:comparison} & $\poro_0(x) = \begin{cases}0.002 & \text{if } x \leq 1.5\\0.001 &\text{else}\end{cases}$ & $u_0(x) = 0$ \\
		\cref{fig:unphysical_physical} & $\poro_0(x) = \begin{cases}0.2 & \text{if } x \in [0.25,0.5]\\0.1 &\text{else}\end{cases}$ & $u_0(x) = 0$  \\
		\cref{fig:fullNonLin} & $\poro_0(x) = \begin{cases}0.2 & \text{if } x \in [0.25,0.75]\\0.1 &\text{else}\end{cases}$ & $u_0(x) = 0$ \\
		\cref{fig:geoTest2b} & $\poro_0(x) = \begin{cases}\frac{1}{1000} \, (\exp(-100(x-\frac{3}{8})^2)+\frac12\exp(-100(x-\frac{3}{4})^2)+1) & \text{if } x \leq \frac{9}{8}\\\frac{1}{1250} & \text{else}\end{cases}$ & $u_0(x) = 0$ \\
		\cref{fig:channel} & $\poro_0(x) = \begin{cases}\frac{1}{1000} \, (\exp(-100((x_1-\frac12)^2+(x_2-\frac12)^2))+1) & \text{if } x_2 \leq 1\\\frac{3}{2500} & \text{else}\end{cases}$ & $u_0(x) = 0$ \\
		\cref{fig:geoTest2b_viscous} & $\poro_0(x) = \begin{cases}\frac{1}{1000} \, (\exp(-100(x-\frac{3}{8})^2)+\frac12\exp(-100(x-\frac{3}{4})^2)+1) & \text{if } x \leq \frac{9}{8}\\\frac{1}{1250} & \text{else}\end{cases}$ & -- \\
		\hline
	\end{tabular}}
	\caption{Initial functions}\label{tab:init_setup}
\end{table}}

\REV{
Finally, we also provide the tolerances used to run the methods from \cref{alg:coupled,alg:adapt_tol} in \cref{tab:alg_params}, where the initial grid size denotes the number of elements in spatial and temporal direction of each initial grid. For example $4\times4$ refers to $4$ elements in spatial direction and $4$ element in temporal direction which means that this grid has $16$ elements in total.
\begin{table}[h!]
	\centering
	\REV{
	\begin{tabular}{lccccccccc}
		\hline
		Figure & initial grid size & \#(time slices) & $\tol_\poroGeneral$ & $\tol_\mathrm{proj}$ & $\tol_\press$ & $\tol_\mathrm{lsq}$ & $\errorRedFac_\poroGeneral$ & $\errorRedFac_\press$ & $\theta$ \\
		\hline
		\cref{fig:comparison} & $8\times1$ & 20 & $10^{-5}$ & $10^{-7}$ & $10^{-5}$ & $5\cdot10^{-7}$ & -- & -- & -- \\
		\cref{fig:unphysical_physical} & $4\times1$ & 50 & $5\cdot10^{-5}$ & $10^{-6}$ & $10^{-4}$ & $5\cdot10^{-6}$ & -- & -- & -- \\
		\cref{fig:fullNonLin} & $4\times4$ & 1 & $10^{-4}$ & $5\cdot10^{-6}$ & $10^{-4}$ & $5\cdot10^{-6}$ & -- & -- & -- \\
		\cref{fig:geoTest2b} & $8\times1$ & 10 & $10^{-6}$ & $5\cdot10^{-7}$ & $10^{-6}$ & $10^{-7}$ & -- & -- & -- \\
		\cref{fig:channel} & $4\times8\times1$ & 15 & $10^{-5}$ & $2\cdot10^{-7}$ & $2\cdot10^{-6}$ & $10^{-6}$ & -- & -- & -- \\
		\cref{fig:fullNonLinErr} & $4\times4$ & 1 & $10^{-5}$ & -- & -- & -- & $\frac12$ & $\frac15$ & $\frac12$ \\
		\cref{fig:geoTest2b_rel_err} & $8\times1$ & 1 & $5\cdot10^{-7}$ & -- & -- & -- & $\frac{1}{100}$ & -- & $\frac12$ \\
		\cref{fig:geoTest2b_viscous} & $8\times1$ & 15 & $5\cdot10^{-7}$ & $10^{-7}$ & $5\cdot10^{-7}$ & $5\cdot10^{-8}$ & -- & -- & -- \\
		\hline
	\end{tabular}}
	\caption{Algorithm parameters}\label{tab:alg_params}
\end{table}}

\REV{
\section{Parabolic H\"{o}lder norms}\label{sec:holder}
On the partition $\Omega^j$, $j=1,\ldots,M$, defined in \cref{rem:piecewise}, we introduce parabolic H\"{o}lder spaces $C^{k,\gamma}_\mathrm{par}(\overline{\Omega}_T^j)$ for $k \in \{0,1\}$ and $\gamma \in (0,1)$ endowed with the parabolic space-time norm defined in~\cite{Ladyzenskaja1968},
\begin{equation*}
	\begin{aligned}
		\norm{u}_{C^{0,\gamma}_\mathrm{par}(\overline{\Omega}^j_T)}
		= &\sup_{\overline{\Omega}^j_T} |u|+ \sup_{\substack{t,x_1,x_2\\x_1 \neq x_2}} \frac{\AV{ u(t,x_1) - u(t,x_2)}}{\AV{x_1 - x_2}^\gamma} + \sup_{\substack{t_1,t_2,x\\t_1 \neq t_2}} \frac{\AV{ u(t_1,x) - u(t_2,x)}}{\AV{t_1 - t_2}^{\frac{\gamma}{2}}} ,\\
		\norm{u}_{C^{1,\gamma}_\mathrm{par}(\overline{\Omega}^j_T)}
		= &\sup_{\overline{\Omega}^j_T} |u| +
		\sup_{\overline{\Omega}^j_T} |\nabla u| + \sup_{\substack{t,x_1,x_2\\x_1 \neq x_2}} \frac{\AV{\nabla u(t,x_1) - \nabla u(t,x_2)}}{\AV{x_1 - x_2}^\gamma}\\
		&+ \sup_{\substack{t_1,t_2,x\\t_1 \neq t_2}} \frac{\AV{u(t_1,x) - u(t_2,x)}}{\AV{t_1 - t_2}^{\frac{1 + \gamma}{2}}} 
		+ \sup_{\substack{t_1,t_2,x\\t_1 \neq t_2}} \frac{\AV{\nabla u(t_1,x) - \nabla u(t_2,x)}}{\AV{t_1 - t_2}^{\frac{\gamma}{2}}} .
	\end{aligned}
\end{equation*}
}

\bibliographystyle{plain}
%\bibliography{references}

\end{document}